\newtheorem{thm}{Theorem}[section]
\newtheorem{defi}{Definition}[section]
\newtheorem{lem}{Lemma}[section]
\theoremstyle{theorem}
\theoremstyle{definition}
\newtheorem{rem}{Remark}[section]
\newcommand{\R}{\mathbb{R}}
\newcommand{\C}{\mathbb{C}}
\numberwithin{equation}{section}
\newcommand{\N}{\mathbb{N}}
\newcommand{\eps}{\epsilon}
\newcommand{\wto}{\rightharpoonup}
\makeatletter \@addtoreset{equation}{section} \makeatother
\newcounter{const}
\author[D. Garrisi]{Daniele Garrisi}
\address[D. Garrisi]{\centerline{Room 324, Sir Peter Mansfield Building}
\centerline{School of Mathematical Sciences}
\centerline{University of Nottingham Ningbo China}
\centerline{199 Taikang East Road}
\centerline{315100, Ningbo, China}}
\author[T. Gou]{Tianxiang Gou}
\address[T. Gou]{%
\centerline{School of Mathematics and Statistics}
\centerline{Xi’an Jiaotong University}
\centerline{710049, Xi’an, Shaanxi, China}}
\subjclass[2010]{Primary: 35Q55; Secondary: 47J35, 35J20.}
\keywords{Orbital stability; Concentration; Standing waves; Nonlinear Schr\"odinger systems; Mass critical exponent.}
\email{daniele.garrisi@nottingham.edu.cn}
\email{tianxiang.gou@xjtu.edu.cn}
\title[Orbital stability and concentration]{Orbital stability and concentration of standing waves to nonlinear Schr\"odinger systems with mass critical exponent}
\thanks{
The work has been supported by the \textsl{PDE Research Group} of School of Mathematical Sciences of the University of Nottingham Ningbo China and funded by the \textsl{FoSE New Researchers Grant}. The second author was supported by the National Natural Science Foundation of China (No. 12101483) and the Postdoctoral Science Foundation of China (No. 2021M702620).}
\begin{document}
\begin{abstract}
For a nonlinear Schr\"odinger system with mass critical exponent, we prove the existence and orbital stability of standing-wave solutions obtained as minimizers of the underlying energy functional restricted to a double mass constraint. In addition, we discuss the concentration of a sequence of minimizers as their masses approach to certain critical masses.
\end{abstract}
\maketitle
\section{Introduction}
In this paper, we are concerned with standing waves to the following time-dependent nonlinear Schr\"odinger system in $\R \times \R^N$,
\begin{align} 
\label{sys}
\left\{
\begin{aligned} 
\textnormal{i}\partial_t\phi_1(t,x) + \Delta_x \phi_1(t,x) + \mu_1 |\phi_1|^{\frac 4N}\phi_1 + 
\beta r_1 |\phi_1|^{r_1 -2}\phi_1 |\phi_2|^{r_2} &= 0,\\
\textnormal{i}\partial_t\phi_2(t,x) + \Delta_x \phi_2(t,x) + \mu_2 |\phi_2|^{\frac 4N}\phi_2 + 
\beta r_2 |\phi_2|^{r_2 -2}\phi_2 |\phi_1|^{r_1} &= 0,
\end{aligned}
\right.
\end{align}
where $N \geq 1$, $\mu_1, \mu_2, \beta>0$, $r_1, r_2>1$, $r_1+r_2<2+ \frac 4 N$ and the wave functions $\phi_1$ and $\phi_2$ are defined on $\R \times \R^N$. 
This system has many applications in mean field models for binary mixtures of 
Bose-Einstein condensates or for binary gases of fermion atoms in degenerate quantum states (Bose–Fermi mixtures, Fermi–Fermi mixtures), see \cite{BFK15,BBE97,Mal08}. Here a standing wave to \eqref{sys} is defined by
$$
\phi_1(t,x) = e^{\textnormal{i}\lambda_1 t}u_1(x), \quad \phi_2(t, x)=e^{\textnormal{i}\lambda_2 t}u_2(x), \quad \lambda_1, \lambda_2 \in \R.
$$
This ansatz leads to the following nonlinear elliptic system satisfied by $u_1$ and $u_2$,
\begin{align}  
\label{elliptic-sys}
\left\{
\begin{aligned} 
-\Delta u_1 + \lambda_1 u_1 = \mu_1 |u_1|^{\frac 4N}u_1 + \beta r_1 |u_1|^{r_1 -2} u_1 |u_2|^{r_2},\\
-\Delta u_2 + \lambda_2 u_2 = \mu_2 |u_2|^{\frac 4N}u_2 + \beta r_2 |u_2|^{r_1 -2} u_2 |u_1|^{r_2}.
\end{aligned}
\right.
\end{align}
Note that the masses of solutions to the Cauchy problem for \eqref{sys} are conserved along time, i.e. for any $t \in [0, T)$,
$$
\int_{\R^N} |\phi_1(t, \cdot)|^2 \, dx=\int_{\R^N} |\phi_1(0, \cdot)|^2 \,dx, \quad \int_{\R^N} |\phi_2(t, \cdot)|^2 \, dx=\int_{\R^N} |\phi_2(0, \cdot)|^2 \,dx.
$$
Thus it is interesting to consider solutions to \eqref{elliptic-sys} having prescribed $L^2$-norm constraints. The study in the present paper is devoted to this direction, namely we shall investigate solutions to \eqref{elliptic-sys} under the following $L^2$-norm constraints,
\begin{align}\label{mass}
\int_{\R^N} |u_1|^2 \,dx=a_1>0, \quad \int_{\R^N} |u_2|^2 \,dx=a_2>0.
\end{align}
Solutions to \eqref{elliptic-sys} and \eqref{mass} are often referred to as normalized solutions, which formally correspond to critical points of the energy functional $J$ \begin{align*} 
\begin{split}
J(u_1, u_2) :=&\,\frac 12 \int_{\R^N} \left(|\nabla u_1|^2+|\nabla u_2|^2\right)dx -
\frac{\mu_1 N}{2N+4} \int_{\R^N} |u_1|^{2+\frac 4N} \,dx\\
\quad &-\frac{\mu_2 N}{2N+4} \int_{\R^N} |u_2|^{2 +\frac 4N}\,dx
-\beta \int_{\R^N} |u_1|^{r_1}|u_2|^{r_2} \,dx
\end{split}
\end{align*}
on the constraint
\[
S(a):=\left\{u\in H^1(\R^N;\mathbb{C}) \ | \ \|u_i\|_{2}^2 = a_i\text{ for }i=1,2\right\}.
\]
In this situation, the parameters $\lambda_1$ and $\lambda_2$ arsing as Lagrange multipliers related to the constraint $S(a_1, a_2)$ are unknown. For the study of normalized solutions to nonlinear Schr\"odinger system \eqref{elliptic-sys}, most of papers are concerned with mass subcritical or supercritical exponent, see for instance \cite{BJS16,BJ18,BS17,BS19,BS11a,BS11b,BZZ21,CCW11,CZ21,Gou18,GJ16,LZ21,NW11,NW13,NW16,YZ22} and references therein. As far as we know, solutions to \eqref{elliptic-sys} with mass critical exponent have not been considered in the literature, so far. Throughout the paper, we shall assume that the exponents and parameters satisfy the assumption:
\begin{equation}
\label{AH}
\tag{H}
 N \geq 1\ \mu_1, \mu_2, \beta>0,\ r_1, r_2>1,\ r_1+r_2<2 +\frac 4N.
\end{equation}
In some cases, we will also require the following additional assumptions:
\begin{align}
\label{A1}
\tag{A1}
N &= 1,2,\ r_2 < 2\ \mbox{or}\ 
N \geq 3,\ r_2 < 2 \ \mbox{and}\ \frac{2r_1}{2-r_2}\leq\frac{2N}{N - 2},\\
\label{A2}
\tag{A2}
N &= 1,2,\ r_1 < 2\ \mbox{or}\ 
N \geq 3,\ r_1 < 2 \ \mbox{and}\ \frac{2r_2}{2-r_1}\leq\frac{2N}{N - 2}.
\end{align}
Here $2 + \frac 4N$ is the so-called mass critical exponent in the sense of the following Gagliardo-Nirenberg inequality,
\begin{align} \label{gn}
\int_{\R^N} |u|^p \, dx \leq \frac{p}{2\|Q_p\|_2^{p-2}} \left(\int_{\R^N}|\nabla u|^2 \,dx \right)^{\frac{N(p-2)}{4}} \left(\int_{\R^N}|u|^2 \, dx\right)^{\frac p 2-\frac{N(p-2)}{4}},
\end{align}
where \(u\in H^1(\R^N)\) and $2 \leq p<\infty$ if $N=1,2$ and $2 \leq p \leq 2^*:=\frac{2N}{N-2}$ if $N \geq 3$. The equality in \eqref{gn} holds for $p>2$ if and only if $u=Q_p$, where $Q_p \in H^1(\R^N)$ is the unique positive solution to the equation
\begin{align} \label{equq}
-\frac{N(p-2)}{4} \Delta Q + \left(1-\frac{(N-2)(p-2)}{4}\right)Q=|Q|^{p-2}Q \quad \mbox{in} \,\, \R^N.
\end{align}
For simplicity, we shall denote $Q_{2+4/N}$ by $Q$ in what follows. Define
\begin{gather}
\label{eq.gn-critical}
Q_i:= Q\mu_i ^{-N/4},\quad
a^*_i:=\mu^{-N/2}_i\|Q\|_2^2,\quad i=1,2.
\end{gather}
Direct calculations show that for \(i=1,2\), there holds
\[
\|Q_i\|_2^2 = a_i^*,\quad \frac{1}{2}\int_{\R^N} |\nabla Q_i|^2\,dx = 
\frac{\mu_i N}{2N + 4}\int_{\R^N} |Q_i|^{2 + \frac4N}\,dx.
\]
To explore solutions to \eqref{elliptic-sys}-\eqref{mass} under the assumption $(H)$, we shall introduce the following minimization problem,
\begin{align} \label{min}
m(a_1, a_2):=\inf_{(u_1, u_2) \in S(a_1, a_2)} J(u_1, u_2).
\end{align}
Since both the functional and the constraint functions are in \(C^1 (H^1\times H^1;\R)\), 
any minimizer to \eqref{min} is a weak solution to \eqref{elliptic-sys}. The first result reads as follows:
\begin{thm} \label{concentration-compactness}
Assume \eqref{AH} holds and \(1\leq N\leq 4\). Then any minimizing sequence to \eqref{min} is compact in $H^1(\R^N) \times H^1(\R^N)$ up to translations, provided one of the following
conditions holds:
\begin{enumerate}
\item \(0 < a_i < a_i^*\) for \(i=1,2\), 
\item $0<a_1 < a_1^*$ and $a_2=a_2^*$, under the additional assumption (A1),
\item $a_1=a_1^*$ and $0<a_2<a_2^*$, under the additional assumption (A2).
\end{enumerate}
\end{thm}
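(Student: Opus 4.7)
The plan is to apply Lions' concentration--compactness principle to any minimizing sequence, ruling out both vanishing and dichotomy so that concentration (equivalently, $H^1$-compactness up to translations) remains the only surviving alternative. I would first establish three preliminary facts. Combining \eqref{gn} with $0<a_i\leq a_i^*$ yields
\[
\frac{\mu_i N}{2N+4}\int_{\R^N}|u_i|^{2+4/N}\,dx \leq \frac12\bigl(a_i/a_i^*\bigr)^{2/N}\|\nabla u_i\|_2^2,
\]
so the pure critical terms are absorbed by the kinetic energy; together with a H\"older-plus-Gagliardo--Nirenberg bound on the coupling, which has strictly subcritical scaling since $r_1+r_2<2+4/N$, this gives $m(a_1,a_2)>-\infty$ and coercivity of $J$ on $S(a_1,a_2)$. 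In the boundary cases (ii) and (iii) the critical inequality becomes an equality along the optimizer profile, and assumptions (A1)/(A2) provide the interpolation needed to keep the coupling strictly subordinate to the gradient. Second, testing $J$ with the rescaled family $u_i(x) = \sqrt{a_i}\,\|\phi\|_2^{-1} t^{N/2}\phi(tx)$ produces
\[
J(u_1,u_2) = \alpha\, t^2 - \beta'\, t^{N(r_1+r_2-2)/2},
\]
with $\beta'>0$; since $N(r_1+r_2-2)/2<2$, letting $t\to 0^+$ shows $m(a_1,a_2)<0$. Hence every minimizing sequence is bounded in $H^1\times H^1$.

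Next I would apply Lions' lemma to the scalar density $\rho_n:=|u_{1,n}|^2+|u_{2,n}|^2$, bounded in $L^1(\R^N)$ with total mass $a_1+a_2$. The vanishing alternative would force $\|u_{i,n}\|_p\to 0$ for every $p\in(2,2^*)$; since both $2+4/N$ and $r_1+r_2$ lie in this range (using $1\leq N\leq 4$), every nonlinear integral would vanish and $\liminf J(u_{1,n},u_{2,n})\geq 0$, contradicting $m(a_1,a_2)<0$.

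The main obstacle is excluding dichotomy. If it occurred, a standard cut-off argument would decompose the minimizing sequence into two pieces with asymptotically disjoint supports and masses converging to $(b_1,b_2)$ and $(a_1-b_1,a_2-b_2)$, the cross coupling vanishing in the limit, whence
\[
m(a_1,a_2) \geq m(b_1,b_2) + m(a_1-b_1,a_2-b_2),
\]
where a zero coordinate on the right is interpreted as the scalar mass-critical infimum (which equals $0$ on $[0,a_i^*]$). To contradict this I would prove the strict subadditivity $m(a_1,a_2) < m(b_1,b_2) + m(a_1-b_1,a_2-b_2)$ for every admissible $(b_1,b_2)\neq(0,0),(a_1,a_2)$. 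The core of the argument is the sub-homogeneity
\[
m(\theta a_1,\theta a_2) < \theta\, m(a_1,a_2),\quad \theta>1,
\]
obtained by testing $J$ on $\sqrt{\theta}\,u_{i,n}$ for an almost-minimizing sequence: the pure critical terms scale as $\theta^{1+2/N}$ and the coupling as $\theta^{(r_1+r_2)/2}$, both strictly larger than $\theta$, so the negative contributions are strictly amplified; the resulting gap stays uniformly positive because $m(a_1,a_2)<0$ forces at least one of the nonlinear integrals to remain bounded away from zero along any minimizing sequence. Strict subadditivity for proportional splits follows directly from this, and monotonicity of $m$ in each mass variable together with a second rescaling handles non-proportional decompositions; the degenerate cases in which one coordinate vanishes are treated by direct comparison with the scalar problem.

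Finally, the compactness alternative of Lions' lemma provides translations $(y_n)\subset\R^N$ such that $u_{i,n}(\cdot+y_n)\wto \tilde u_i$ in $H^1$ with $\|\tilde u_i\|_2^2=a_i$. Brezis--Lieb applied to both pure $L^{2+4/N}$ integrals and to the coupling, combined with weak lower semicontinuity of the kinetic term, forces $J(\tilde u_1,\tilde u_2)\leq m(a_1,a_2)$; equality then promotes the convergences to strong convergence in $H^1\times H^1$, completing the proof.
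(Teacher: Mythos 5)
Your overall skeleton (boundedness, $m(a_1,a_2)<0$, non-vanishing via Lions' lemma, then ruling out dichotomy and loss of mass) matches the paper, and the first two stages are essentially correct. The genuine gap is in the dichotomy step. Your sub-homogeneity argument $m(\theta a_1,\theta a_2)<\theta\, m(a_1,a_2)$ for $\theta>1$ only yields strict subadditivity for splits \emph{proportional} to $(a_1,a_2)$; for a general decomposition $(a_1,a_2)=(b_1,b_2)+(c_1,c_2)$ no single scalar rescales $(b_1,b_2)$ onto $(a_1,a_2)$, and the phrase ``monotonicity of $m$ in each mass variable together with a second rescaling'' assumes exactly the statement that is hard to prove here. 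In particular, when a split has a vanishing coordinate, say $(b_1,0)$, the scalar mass-critical infimum equals $0$ by Lemma~\ref{smin}, so the required inequality reduces to the strict monotonicity $m(a_1,a_2)<m(a_1-b_1,a_2)$ of the \emph{coupled} problem, and ``direct comparison with the scalar problem'' gives nothing. This is precisely the obstruction the paper flags: both the usual scaling route and the rearrangement argument of Gou--Jeanjean rely on the scalar infimum being negative, which fails at the mass-critical exponent. The paper instead shows that if dichotomy occurred, the two limiting profiles would be genuine, attained minimizers for $m(b_1,b_2)$ and $m(c_1,c_2)$, and then derives $m(b_1+c_1,b_2+c_2)<m(b_1,b_2)+m(c_1,c_2)$ from the coupled rearrangement inequality of Garrisi (\cite[Lemma 3.1]{Gar12}), which produces a quantified strict decrease of the gradient when two radially decreasing profiles with disjoint supports are symmetrized together.

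Second, your proposal never explains where the hypothesis $1\le N\le 4$ is used; note that $2+\frac4N<2^*$ holds in every dimension $N\ge 3$, so the non-vanishing step is not the reason. The restriction enters when excluding loss of mass in exactly one component, i.e.\ when proving $m(a_1,a_2)<m(a_1,b_2)$ for $b_2<a_2$: the paper shows that equality would force the derivative of $t\mapsto J(u_1,tu_2)$ to vanish at $t=1$, hence the Lagrange multiplier $\lambda_2=0$, which contradicts a Liouville-type result valid only for $N\le 4$. Without this step (or a substitute for it), your final conclusion that $\|\tilde u_i\|_2^2=a_i$ for both $i$ is unsupported, since the Brezis--Lieb and lower semicontinuity argument alone does not prevent part of one component's mass from escaping to infinity.
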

\begin{rem}
The additional assumptions \eqref{A1} and \eqref{A2} for the cases when one of the masses is critical are technical, and are applied to derive the boundedness of minimizing sequences to \eqref{min} in $H^1(\R^N) \times H^1(\R^N)$.
\end{rem}
In the spirit of the well-known Lions concentration compactness principle presented in \cite{Lio84a,Lio84b}, to prove Theorem \ref{concentration-compactness}, we first need to exclude vanishing of any minimizing sequence to \eqref{min}. This can be done by using \cite[Lemma I.1]{Lio84b} and the fact that $m(a_1, a_2)<0$, see Lemma~\ref{inf}. Then we need to exclude dichotomy. To do this, it is required to establish the following strict subadditivity inequality:
\begin{align}\label{sub}
m(a_1, a_2) <m(b_1, b_2) +m(c_1, c_2),
\end{align}
where $a_i = b_i +c_i $ and $b_i, c_i\geq 0 $ for \(i=1,2\), and both \((b_1,b_2)\) and \((c_1,c_2)\) are different from \((0,0)\). In the case of
nonlinear Schr\"odinger equations, one can adapt scaling techniques to achieve strict subadditivity inequalities, see \cite{BBGM07,BS11a,BS11b,CJS10,Shi14} and references therein. However, when it comes to the case of nonlinear Schr\"odinger systems, scaling techniques are no longer suitable to check strict subadditivity inequalities. This indeed makes the study of the compactness of minimizing sequence more difficult. In this situation, the authors in \cite{Gou18,GJ16} employed the coupled rearrangement arguments originating from \cite{Shi17} to deal with dichotomy in the mass subcritical case, which avoids to demonstrate the strict subadditivity inequality. 
The argument of \cite{Gou18,GJ16} relies on the fact that the infimum of the scalar minimization problem is negative. Nevertheless, this is not the case in our context, see Lemma \ref{smin}, because the problem under our consideration has 
the mass critical exponent. For this reason, we shall make use of \cite[Lemma 3.1]{Gar12} to discuss dichotomy of any minimizing sequence to \eqref{min}.
In \cite{Gar12}, the problem was actually treated in the mass subcritical case, then the adaption of the arguments to ours is nontrivial and additional effort is needed. Indeed, with the help of \cite[Lemma 3.1]{Gar12}, we are able to show that any minimizing sequence to \eqref{min} is convergent in $L^p(\R^N) \times L^p(\R^N)$ for any $2<p<\infty$ if $N=1,2$ and for any $2<p<2^*$ if $N \geq 3$. To obtain the desired conclusion, it suffices to prove that the sequence is also convergent in $L^2(\R^N) \times L^2(\R^N)$. 
To this purpose, we need to establish the following strict inequality,
\begin{align} \label{ineq}
m(a_1, a_2)<m(b_1, b_2),
\end{align}
where $0<b_i \leq a_i$ for $i=1,2$ and $(a_1, a_2) \neq (b_1, b_2)$. If $b_1<a_1$ and $b_2<a_2$, by Lemma \ref{inf}, then we can easily infer that \eqref{ineq} holds true for any $N \geq 1$. However, if $b_1=a_1$ and $b_2<a_2$ or $a_1<b_1$ and $a_2=b_2$, then the verification of \eqref{ineq} becomes tough, because of the presence of mass critical exponent. In these two cases, we can only prove that the strict inequality holds true for $1 \leq N \leq 4$, and it is unclear whether it remains true for any $N \geq 5$. This is the only reason that we assume that $1 \leq N \leq 4$. 

To introduce our result on orbital stability of the set of minimizers, we set
\[
G(a_1, a_2):=\left\{(u_1, u_2) \in S(a_1, a_2)\mid J(u_1, u_2)=m(a_1, a_2)\right\}
\]
and define the following scalar product in $H^1(\R^N) \times H^1(\R^N)$,
\begin{align} \label{scalar}
(u,v)_{H^1\times H^1} := \sum_{i = 1}^2 \text{Re}\int_{\R^N} u_i(x) \overline{v}_i (x)\,dx + \sum_{i = 1}^2 \text{Re}\int_{\R^N} \nabla u_i(x) \overline{\nabla v}_i (x)\,dx.
\end{align}
The Cauchy problem for \eqref{sys} is locally well-posed in the space 
$H^1(\R^N) \times H^1(\R^N)$, in the sense of \cite[Remark~3.5,~p.~126]{Tao06}:
for every initial datum $(\phi_{1,0}, \phi_{2,0})$ in $ H^1(\R^N) \times H^1(\R^N)$, there exist a constant $T > 0$ and a unique solution $(\phi_1, \phi_2)$ in $ C_t H^1_x ([0,T)\times\R^N) \times C_t H^1_x ([0,T)\times\R^N)$ to \eqref{sys} such that \(\phi_1 (t,0) = \phi_{1,0}\) and \(\phi_2 (t,0) = \phi_{2,0}\).
Let $d(\cdot, \cdot)$ be the metric induced by the scalar product \eqref{scalar}.
\begin{defi}
A subset $G \subset H^1(\R^N)\times H^1(\R^N)$ is orbitally stable if for any $\varepsilon > 0$, there exists a constant $\delta > 0$ such that,
whenever \(d((\phi_{1,0}, \phi_{2,0}), G) < \delta\), there holds \(\sup_{t \geq 0}d((\phi_1(t, \cdot), \phi_2(t, \cdot)), G) < \varepsilon\).
\end{defi}
\begin{thm} \label{stability}
Under the assumptions of Theorem~\ref{concentration-compactness}, $G(a_1, a_2)$ is orbitally stable.
\end{thm}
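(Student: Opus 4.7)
The plan is to proceed by contradiction along the lines of the classical Cazenave--Lions argument, reducing orbital (in)stability to the compactness result Theorem~\ref{concentration-compactness}. Suppose $G(a_1,a_2)$ is not orbitally stable: then there exist $\varepsilon_0 > 0$, initial data $(\phi_{1,0}^n, \phi_{2,0}^n)$ with $d\bigl((\phi_{1,0}^n, \phi_{2,0}^n), G(a_1,a_2)\bigr) \to 0$, and times $t_n > 0$ such that
\[
u^n := \bigl(\phi_1^n(t_n, \cdot), \phi_2^n(t_n,\cdot)\bigr) \quad \text{satisfies} \quad d\bigl(u^n, G(a_1,a_2)\bigr) \geq \varepsilon_0,
\]
where $(\phi_1^n, \phi_2^n)$ is the maximal solution to \eqref{sys} with initial datum $(\phi_{1,0}^n, \phi_{2,0}^n)$. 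Global existence on $[0, t_n]$ will be ensured a posteriori by the $H^1$-bound obtained below combined with the standard blow-up alternative.

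Next, I would show that $u^n$ is, up to a slight renormalization, a minimizing sequence for \eqref{min}. By conservation of mass and energy along the flow of \eqref{sys}, together with the continuity of $J$ and $\|\cdot\|_2$ on $H^1\times H^1$,
\[
\|u_i^n\|_2^2 = \|\phi_{i,0}^n\|_2^2 \longrightarrow a_i, \qquad J(u^n) = J(\phi_{1,0}^n, \phi_{2,0}^n) \longrightarrow m(a_1,a_2).
\]
Setting $\alpha_i^n := \sqrt{a_i}/\|u_i^n\|_2 \to 1$ and $\tilde u^n := (\alpha_1^n u_1^n, \alpha_2^n u_2^n) \in S(a_1,a_2)$, I would then establish that $u^n$ is bounded in $H^1 \times H^1$ by adapting the Gagliardo--Nirenberg argument that bounds minimizing sequences for \eqref{min} in the proof of Theorem~\ref{concentration-compactness} (using $a_i \leq a_i^*$, and (A1) or (A2) in the critical-mass cases). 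This gives $\|\tilde u^n - u^n\|_{H^1 \times H^1} \to 0$, hence $J(\tilde u^n) \to m(a_1,a_2)$, so that $\tilde u^n$ is a bona fide minimizing sequence for \eqref{min}.

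Finally, Theorem~\ref{concentration-compactness} applied to $\tilde u^n$ yields, after passing to a subsequence and suitable translations $y^n \in \R^N$, a limit $v \in G(a_1,a_2)$ with $\tilde u^n(\cdot + y^n) \to v$ in $H^1 \times H^1$. Because $J$ and the constraint $S(a_1,a_2)$ are invariant under spatial translations, $v(\cdot - y^n) \in G(a_1,a_2)$, and
\[
d\bigl(u^n, G(a_1,a_2)\bigr) \leq \|u^n - \tilde u^n\|_{H^1 \times H^1} + \|\tilde u^n(\cdot + y^n) - v\|_{H^1\times H^1} \longrightarrow 0,
\]
contradicting $d(u^n, G(a_1,a_2)) \geq \varepsilon_0$. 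The only nontrivial point beyond routine bookkeeping is the $H^1$-boundedness of $u^n$ in the critical-mass cases (2) and (3): the Gagliardo--Nirenberg estimate used to bound minimizing sequences of \eqref{min} has to be transferred from the exact constraints $\|u_i\|_2^2 = a_i$ to the perturbed ones $\|u_i^n\|_2^2 \to a_i$, which is a small continuity argument still requiring (A1) or (A2).
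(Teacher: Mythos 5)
Your proposal is correct and follows essentially the same route as the paper: the Cazenave--Lions contradiction argument, using conservation of mass and energy to turn the evolved data at times $t_n$ into a (renormalized) minimizing sequence, then invoking Theorem~\ref{concentration-compactness} and translation invariance of $G(a_1,a_2)$ to contradict $d(u^n, G)\geq\varepsilon_0$. Your additional remarks on global existence via the blow-up alternative and on transferring the $H^1$-bound to the perturbed masses make explicit points the paper passes over silently, but they do not change the argument.
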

In Theorem \ref{stability}, the fact that $G(a_1, a_2) \neq \emptyset$ is guaranteed by the compactness of minimizing sequences
to \eqref{min}, obtained in Theorem \ref{concentration-compactness}. Utilizing \cite[Theorem 1.1]{BZ88} and \cite[Theorem 3.4]{LL01}, we can prove additional features of 
minimizers to \eqref{min}.
\begin{thm} 
\label{thm}
Let $(u_1, u_2) \in S(a_1, a_2)$ be a minimizer to \eqref{min}. Then there are \(x_0\) in \(\R^N\) and \(\theta_1, \theta_2\) in \(\R$ such that
$$
(u_1(x), u_2(x)) = (e^{\textnormal{i}\theta_1} R_1 (x - x_0), e^{\textnormal{i}\theta_2} R_2 (x - x_0)), \quad x\in\R^N,
$$
where \(R_1,R_2\in C^2(\R^N)\), are positive and radially symmetric-decreasing.
\end{thm}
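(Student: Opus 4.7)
The plan is a two-stage symmetrization argument: first strip the phases of $u_1,u_2$ via the diamagnetic inequality, then apply the Schwarz symmetric decreasing rearrangement to the resulting non-negative functions, and finally exploit the equality cases of the inequalities involved.

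\emph{Step 1 (phase reduction and positivity).} The diamagnetic inequality gives $|\nabla |u_i|| \leq |\nabla u_i|$ a.e.\ on $\R^N$, while every other term of $J$ depends only on $|u_1|$ and $|u_2|$. Thus $J(|u_1|,|u_2|) \leq J(u_1,u_2) = m(a_1,a_2)$ and $(|u_1|,|u_2|)\in S(a_1,a_2)$, so equality must hold. Writing $w_i := |u_i|$, the pair $(w_1,w_2)$ is a real non-negative minimizer and hence a weak solution of \eqref{elliptic-sys}. A standard bootstrap (Moser iteration to reach $L^\infty$, followed by Schauder theory) gives $w_i \in C^2(\R^N)$, and the strong maximum principle applied to $(-\Delta+\lambda_i^+) w_i \geq 0$ yields $w_i > 0$ on $\R^N$, since $\|w_i\|_2^2 = a_i > 0$. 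Writing $u_i = e^{\textnormal{i}\theta_i} w_i$ on $\{w_i > 0\} = \R^N$, the identity $|\nabla u_i|^2 = |\nabla w_i|^2 + w_i^2|\nabla\theta_i|^2$ combined with equality in the diamagnetic inequality forces $\nabla\theta_i \equiv 0$ on the connected set $\R^N$; hence $\theta_i \in \R$ is constant.

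\emph{Step 2 (Schwarz symmetrization).} Let $w_i^*$ denote the symmetric decreasing rearrangement of $w_i$. Rearrangement preserves all $L^p$ norms, so $(w_1^*,w_2^*)\in S(a_1,a_2)$ and the pure-power terms in $J$ are unchanged; the Pólya--Szegő inequality yields $\int|\nabla w_i^*|^2 \leq \int |\nabla w_i|^2$, and the Hardy--Littlewood rearrangement inequality applied to the non-decreasing maps $t\mapsto t^{r_i}$ gives
\[
\int_{\R^N} w_1^{r_1} w_2^{r_2}\,dx \leq \int_{\R^N} (w_1^*)^{r_1}(w_2^*)^{r_2}\,dx.
\]
Combining these, $J(w_1^*,w_2^*)\leq J(w_1,w_2) = m(a_1,a_2)$, so equality holds in each step and $(w_1^*,w_2^*)$ is itself a minimizer.

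\emph{Step 3 (equality cases and matching centres).} Equality in Pólya--Szegő combined with the positivity and $C^2$-regularity of $w_i$, which rules out plateaus on $\{0 < w_i^* < \sup w_i^*\}$ via the radial ODE satisfied by $w_i^*$, permits invoking \cite[Theorem 1.1]{BZ88}: each $w_i$ coincides with its rearrangement up to translation, $w_i(x) = R_i(x - x_i)$ for some $x_i\in \R^N$, with $R_i := w_i^*$ positive, radial and decreasing. The equality case of the coupling rearrangement inequality, supplied by \cite[Theorem 3.4]{LL01}, then forces the two centres to coincide: $x_1 = x_2 =: x_0$. Together with Step 1 this yields $(u_1,u_2) = (e^{\textnormal{i}\theta_1} R_1(\cdot - x_0), e^{\textnormal{i}\theta_2} R_2(\cdot - x_0))$. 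The main obstacle is this equality analysis: verifying the Brothers--Ziemer non-degeneracy condition through strict monotonicity of the radial profile, and extracting a \emph{common} translation centre from the Hardy--Littlewood equality case for the coupling term, a step which is delicate precisely because $r_1,r_2$ are non-integer and the two rearrangements interact only through the mixed integral.
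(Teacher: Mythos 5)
Your proposal is correct and follows essentially the same route as the paper: diamagnetic inequality plus its equality case for the constant phases (the paper delegates this to \cite[Theorem~4.1]{HS04}), Schwarz symmetrization with P\'olya--Szeg\H{o} and the Hardy--Littlewood/Riesz inequality for the coupling term, the Brothers--Ziemer theorem \cite[Theorem~1.1]{BZ88} for the equality case, and \cite[Theorem~3.4]{LL01} to identify the two centres. The one step you leave as a sketch --- verifying the Brothers--Ziemer non-degeneracy condition that the critical set of \(w_i^*\) in \(\{0<w_i^*<\sup w_i^*\}\) is null --- is carried out in the paper exactly along the lines you indicate: since \((w_1^*,w_2^*)\) is itself a minimizer it solves the Euler--Lagrange system, is \(C^2\) by the same bootstrap, and a strong maximum principle/Hopf argument on the superlevel set \(\{w_1^*\geq w_1^*(z_0)\}\) (using that both \(w_1^*\) and \(w_2^*\) are symmetric-decreasing about the origin) excludes any critical value below the maximum.
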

Finally, we prove an asymptotic behaviour of minimizers to \eqref{min}, as its masses approach to critical masses in both components.
\begin{thm} \label{concentration-blowup}
Let $\{(u_{1,n},u_{2,n})\} $ be a sequence such that \((u_{1,n},u_{2,n})\in G(a_n)\), where \(\{a_n\}\) is such that 
\(0 < a_{i,n}\leq a_i^*\) for \(i=1,2\), and 
\((a_{1,n},a_{2,n})\neq (a_1^*,a_2^*)\). 
Suppose also that $ (a_{1,n},a_{2,n})\to (a_1^*, a_2^*)$ as $n \to \infty$. 
Then there exist a sequence $\{y_n\} \subset \R^N$, $\theta_1, \theta_2 \in \R$ and $\lambda_1, \lambda_2 \geq 0$ satisfying
\[
a_1^*\lambda_1+a_2^*\lambda_2=\frac2 N
\]
such that
\[
\eps_n^{\frac N 2}u_{i,n}(\eps_n(\cdot - y_n)) \to e^{\textnormal{i}\theta_i} \mu_i^{-\frac N 4} \alpha_i^{\frac N 4} Q\left(\alpha_i^{\frac 12} x\right) \,\, \mbox{in} \,\,H^1(\R^N) \,\, \mbox{as} \,\, n \to \infty,
\]
where $\alpha_i=\frac{\lambda_i N}{2}$ for $i=1,2$, $Q $ is the unique positive solution to \eqref{equq} with $p=2+\frac 4N$ and 
$$
\eps_n:=\left(\int_{\R^N} |\nabla u_{1,n}|^2+|\nabla u_{2,n}|^2\, dx\right)^{-\frac 12}=o_n(1).
$$
\end{thm}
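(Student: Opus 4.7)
The plan is to show that the minimizers blow up in gradient as the masses approach the critical values, and after rescaling at this natural scale they converge to an extremizer of the Gagliardo--Nirenberg inequality \eqref{gn}, which is a scaled $Q$.

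I would first establish that $m(a_{1,n},a_{2,n})\to-\infty$: testing $J$ on the mass-preserving trial $u_i := (a_{i,n}/a_i^*)^{1/2}\lambda^{N/2}Q_i(\lambda\cdot) \in S(a_{1,n},a_{2,n})$ and using $\tfrac12\|\nabla Q_i\|_2^2 = \tfrac{\mu_iN}{2N+4}\|Q_i\|_{2+4/N}^{2+4/N}$, the energy reduces to $A_n\lambda^2 - B_n\lambda^\sigma$, where $\sigma := \tfrac{N(r_1+r_2)}{2}-N\in (0,2)$, $A_n\to 0$ (since $a_{i,n}/a_i^*\to 1$), and $B_n\to\beta\int|Q_1|^{r_1}|Q_2|^{r_2}>0$; optimizing in $\lambda$ gives an upper bound $m\leq -CA_n^{-\sigma/(2-\sigma)}\to-\infty$. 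A contradiction argument via Gagliardo--Nirenberg then shows $\eps_n\to 0$: otherwise all terms in $J$ would stay bounded. Setting $w_{i,n}(x) := \eps_n^{N/2}u_{i,n}(\eps_n x)$, one has $\|w_{i,n}\|_2^2 = a_{i,n}\to a_i^*$ and $\|\nabla w_{1,n}\|_2^2 + \|\nabla w_{2,n}\|_2^2 = 1$.

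A direct change of variables rewrites the energy as
\[
\eps_n^2 J(u_{1,n},u_{2,n}) = \tfrac12 - \sum_{i=1}^2\tfrac{\mu_iN}{2N+4}\|w_{i,n}\|_{2+4/N}^{2+4/N} - \beta\,\eps_n^{N+2-\frac{N(r_1+r_2)}{2}}\int|w_{1,n}|^{r_1}|w_{2,n}|^{r_2}\,dx.
\]
Since $r_1+r_2<2+4/N$, the coupling term is $o(1)$ (positive $\eps_n$-power, bounded integral by GN). Combined with $J<0$ and the sharp GN bound $\tfrac{\mu_iN}{2N+4}\|w_{i,n}\|_{2+4/N}^{2+4/N}\leq \tfrac12(a_{i,n}/a_i^*)^{2/N}\|\nabla w_{i,n}\|_2^2$, a squeeze forces $\eps_n^2 J(u_{1,n},u_{2,n})\to 0$ and, critically, asymptotic saturation of the Gagliardo--Nirenberg inequality for each component separately. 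The classification of GN extremizers, together with $\|w_{i,n}\|_2^2\to a_i^*$, then yields (along a subsequence) translations $y_{i,n}\in\R^N$, phases $\theta_i\in\R$, and constants $\alpha_i>0$ with $\alpha_ia_i^* = \lim\|\nabla w_{i,n}\|_2^2$ such that $e^{-i\theta_i}w_{i,n}(\cdot + y_{i,n})\to \mu_i^{-N/4}\alpha_i^{N/4}Q(\alpha_i^{1/2}\,\cdot)$ strongly in $H^1$. Setting $\lambda_i := 2\alpha_i/N$, the constraint $\sum\|\nabla w_{i,n}\|_2^2 = 1$ becomes $a_1^*\lambda_1+a_2^*\lambda_2 = 2/N$, and the Nehari identity $\|\nabla u_{i,n}\|_2^2 + \lambda_{i,n}a_{i,n} = \mu_i\|u_{i,n}\|_{2+4/N}^{2+4/N} + \beta r_i\int|u_{1,n}|^{r_1}|u_{2,n}|^{r_2}dx$, after rescaling and passing to the limit, gives $\eps_n^2\lambda_{i,n}\to\lambda_i$.

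The main obstacle is to replace the two a priori different translations $y_{1,n},y_{2,n}$ by a single $y_n$. I would exploit the Pohozaev-type identity $\beta\int|u_{1,n}|^{r_1}|u_{2,n}|^{r_2}dx = \tfrac{-2\,m(a_{1,n},a_{2,n})}{2-\sigma}$, obtained by differentiating $\lambda\mapsto J(\lambda^{N/2}u_{1,n}(\lambda\cdot),\lambda^{N/2}u_{2,n}(\lambda\cdot))$ at $\lambda=1$ (a critical direction since $(u_{1,n},u_{2,n})$ is a minimizer). After rescaling this reads $\beta\int|w_{1,n}|^{r_1}|w_{2,n}|^{r_2}dx = \eps_n^\sigma\cdot\tfrac{-2\,m(a_{1,n},a_{2,n})}{2-\sigma}$, and combining with the sharp trial-function bound of the first paragraph produces a positive lower bound on this rescaled coupling. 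Since $|y_{1,n}-y_{2,n}|\to\infty$ would force $\int|w_{1,n}|^{r_1}|w_{2,n}|^{r_2}dx\to 0$ by disjoint concentration of the two limit profiles, $\{y_{1,n}-y_{2,n}\}$ stays bounded and the common choice $y_n := y_{1,n}$ yields the stated $H^1$-convergence.
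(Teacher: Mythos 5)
Your first half is sound and runs parallel to the paper's: the trial-function computation giving $m(a_{1,n},a_{2,n})\to-\infty$, the deduction $\eps_n\to 0$, the rescaling to $w_{i,n}$ with unit total gradient, and the squeeze between $J<0$ and the sharp Gagliardo--Nirenberg bound forcing $\sum_i\tfrac{\mu_i N}{2N+4}\|w_{i,n}\|_{2+4/N}^{2+4/N}\to\tfrac12$ together with componentwise asymptotic saturation of \eqref{gn}. All of this checks out (the paper reaches the same point via the Pohozaev identity). The divergence, and the gap, is in how you recover a \emph{single} translation sequence. You extract separate translations $y_{1,n},y_{2,n}$ from a componentwise compactness theorem for GN-optimizing sequences and then claim a positive lower bound on $\int|w_{1,n}|^{r_1}|w_{2,n}|^{r_2}\,dx=\eps_n^{\sigma}\cdot\tfrac{-2m(a_{1,n},a_{2,n})}{\beta(2-\sigma)}$ by ``combining with the sharp trial-function bound.'' This does not follow. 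The trial functions give $-m(a_{1,n},a_{2,n})\gtrsim A_n^{-\sigma/(2-\sigma)}$, and the Pohozaev identity plus \eqref{gn} give $-m(a_{1,n},a_{2,n})\lesssim\eps_n^{-\sigma}$; both inequalities bound $\eps_n^{-\sigma}$ \emph{from below} in terms of quantities tending to infinity. To conclude $\eps_n^{\sigma}(-m(a_{1,n},a_{2,n}))\geq c>0$ you would need the reverse bound $\eps_n^{-\sigma}\lesssim -m(a_{1,n},a_{2,n})$, i.e.\ an a priori \emph{upper} bound on the gradient blow-up rate in terms of the energy, and nothing in your argument supplies it. Without that, the scenario $|y_{1,n}-y_{2,n}|\to\infty$ with $\int|w_{1,n}|^{r_1}|w_{2,n}|^{r_2}\,dx\to 0$ (hence $-m(a_{1,n},a_{2,n})=o(\eps_n^{-\sigma})$) is not excluded by anything you have written.

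A secondary, related issue: your componentwise profile extraction presumes $\alpha_i=\lim\|\nabla w_{i,n}\|_2^2/a_i^*>0$ for \emph{both} $i$. If, say, $\|\nabla w_{1,n}\|_2\to 0$, the GN saturation for that component is vacuous, there is no nontrivial profile, and the claimed $H^1$-convergence fails (the $L^2$-mass $a_{1,n}\to a_1^*>0$ cannot converge to the zero profile); this degenerate case must be ruled out, and energy bookkeeping at order $\eps_n^2$ alone does not do it. The paper sidesteps both difficulties by a different route: it applies Lions' lemma \cite[Lemma I.1]{Lio84b} to the \emph{pair} $(v_{1,n},v_{2,n})$ to obtain one common translation from the outset, passes to the limit in the rescaled Euler--Lagrange system \eqref{equv} (after showing $\eps_n^2\lambda_{i,n}$ converges via the Nehari identities), and then uses the Brezis--Lieb lemma combined with the sharp constant in \eqref{gn} to force $\|v_i\|_2^2=a_i^*$ and strong $H^1$-convergence of both components; the limit profile is then identified through the decoupled limiting equations \eqref{equs} and the uniqueness of $Q$. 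I would recommend restructuring your second half along these lines, or else supplying an actual proof of the lower bound on the rescaled coupling term, which at present is the missing keystone of your argument.
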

\subsection*{Notations} Throughout the paper, $L^p(\R^N)$ denotes the usual Lebesgue space equipped with the norm
$$
\|u\|_p:=\left(\int_{\R^N}|f(x)|^p \, dx \right)^{\frac{1}{p}}, \quad 1 \leq p<\infty, \quad  \|u\|_\infty:= \underset{x\in \R^N}{\mbox{ess sup}} \, |u(x)|.
$$
Moreover, $H^1(\R^N)$ denotes the usual Sobolev space equipped with the norm
$$
\|u\|_{H^1}^2 :=\|u\|_2^2 + \|\nabla u\|_2^2.
$$
We denote by $B_R(x)$ the ball in $\R^N$ with the center $x$ and radius $R > 0$. We use the notation $o_n(1)$ to denote any quantity which tends to zero as $n$ goes to infinity.
\section{Proofs of main results}
In this section, we shall assume that (H) holds and prove Theorems \ref{concentration-compactness}-\ref{concentration-blowup}. First of all, by \eqref{gn} and H\"older's inequality, one can easily check that $J$ is well-defined in $H^1(\R^N) \times H^1(\R^N)$. In fact, as already observed in \cite{GJ16}, the term \(|u_1|^{r_1} |u_2|^{r_2}\) is
integrable on \(\mathbb{R}^N\) under the assumption (H). Since the term is also \(C^1\), the functional \(J\) is differentiable. For a proof, we refer to
\cite[Theorem~2.6,~p.~17]{AP93}.
\subsection{Concentration-Compactness of minimizing sequences} It is convenient to introduce the following scalar minimization problem,
\begin{equation*}
m_{\mu}(a):=\inf_{u \in S_1 (a)} I_{\mu}(u),
\end{equation*}
where 
\begin{equation}
\label{eq.single}
\begin{split}
I_{\mu}(u) &:=\frac 12 \int_{\R^N} |\nabla u|^2\, dx -\frac{\mu N}{2N+4} \int_{\R^N} |u|^{2+\frac 4 N} \, dx,\\
S_1 (a) &:= \{u\in H^1 (\R^N;\C) \mid \|u\|_2^2 = a\}.
\end{split}
\end{equation}
For any $u \in S_1 (a)$ and $t>0$, we introduce a conformal rescaling of \(u\) as
\[
u_t(x) = t^{\frac N 2} u(tx), \quad x \in \R^N.
\]
From \eqref{gn}, we are able to derive the following result.
\begin{lem} 
\label{smin}
Let $\mu>0$, and set $a^* := \mu^{-\frac N 2}\|Q\|_2^2 $ such that $m_{\mu}(a)=0$ for any $0<a \leq a^*$ and $m_{\mu}(a)=-\infty$ for any $a>a^*$.
\end{lem}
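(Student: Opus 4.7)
The plan is to combine the sharp Gagliardo--Nirenberg inequality at the mass-critical exponent $p = 2 + 4/N$ with the mass-preserving rescaling $u_t$ introduced just before the lemma. A direct change of variables shows $\|u_t\|_2^2 = \|u\|_2^2$, $\|\nabla u_t\|_2^2 = t^2\|\nabla u\|_2^2$, and $\int_{\R^N}|u_t|^{2+4/N}\,dx = t^2 \int_{\R^N}|u|^{2+4/N}\,dx$, so $I_\mu(u_t) = t^2 I_\mu(u)$. This scale invariance is the key structural feature of the critical exponent, and both halves of the dichotomy will be obtained by letting $t\to 0^+$ or $t\to\infty$ on a suitably chosen base function.

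For $0 < a \leq a^*$, I would substitute $p = 2 + 4/N$ into \eqref{gn}, where the exponents reduce to $N(p-2)/4 = 1$ and $p/2 - N(p-2)/4 = 2/N$. After multiplying by $\mu N/(2N+4)$, the factor $p/2$ from the GN constant cancels the coefficient in $I_\mu$ and, combined with $a^* = \mu^{-N/2}\|Q\|_2^2$, produces the sharp bound
\[
I_\mu(u) \geq \tfrac12\|\nabla u\|_2^2\left[1 - (\|u\|_2^2/a^*)^{2/N}\right], \qquad u \in H^1(\R^N).
\]
The bracket is nonnegative whenever $\|u\|_2^2 \leq a^*$, giving $m_\mu(a) \geq 0$. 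For the matching upper bound, fix any $u \in S_1(a)$: since $u_t \in S_1(a)$ and $I_\mu(u_t) = t^2 I_\mu(u) \to 0$ as $t \to 0^+$, one gets $m_\mu(a) \leq 0$, hence $m_\mu(a) = 0$.

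For $a > a^*$, the idea is to exhibit a negative-energy element of $S_1(a)$ and then drive the energy to $-\infty$ via the same rescaling. Writing $Q_\mu := \mu^{-N/4} Q$, which by \eqref{eq.gn-critical} satisfies $\|Q_\mu\|_2^2 = a^*$ and $\tfrac12\|\nabla Q_\mu\|_2^2 = \tfrac{\mu N}{2N+4}\int_{\R^N}|Q_\mu|^{2+4/N}\,dx$, I would take $v := (a/a^*)^{1/2} Q_\mu \in S_1(a)$. The identity above collapses $I_\mu(v)$ to $\tfrac12\|\nabla Q_\mu\|_2^2\bigl[(a/a^*) - (a/a^*)^{1+2/N}\bigr]$, which is strictly negative because $a/a^* > 1$. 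Then $v_t \in S_1(a)$ with $I_\mu(v_t) = t^2 I_\mu(v) \to -\infty$ as $t \to \infty$, so $m_\mu(a) = -\infty$. The only delicate step is the bookkeeping in the second paragraph, where the GN constant $p/(2\|Q\|_2^{p-2})$, the coefficient $\mu N/(2N+4)$, and the definition of $a^*$ must align exactly to produce the threshold factor $(\|u\|_2^2/a^*)^{2/N}$; once this sharp form is in place, the rest of the dichotomy follows automatically from scale invariance.
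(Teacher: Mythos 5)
Your proof is correct, and it is precisely the argument the paper has in mind: the lemma is stated there without proof beyond the remark that it follows from \eqref{gn}, and your computation (the sharp constant at $p=2+\frac4N$ collapsing to the factor $1-(\|u\|_2^2/a^*)^{2/N}$, plus the $t^2$-homogeneity of $I_\mu$ under the mass-preserving rescaling) is the standard way to fill that in. All the bookkeeping checks out, including the identity $\tfrac12\|\nabla Q_\mu\|_2^2=\tfrac{\mu N}{2N+4}\int_{\R^N}|Q_\mu|^{2+4/N}\,dx$ used to make the test function for $a>a^*$ have negative energy.
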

We are going to collect some basic properties with respect to the minimization problem \eqref{min}. 
\begin{lem} \label{inf} 
Assume \eqref{AH} holds.
\begin{enumerate}
\item For  $0<a_1<a_1^*$ and $0<a_2<a_2^*$, then $-\infty<m(a_1, a_2)<0$ and minimizing sequences to \eqref{min} are bounded in $H^1(\R^N) \times H^1(\R^N)$.
\item For $0<a_1<a_1^*$ and $a_2=a_2^*$, under the additional assumption \eqref{A1}, we have 
$-\infty<m(a_1, a_2)<0$ and any minimizing sequence to \eqref{min} is bounded in $H^1(\R^N) \times H^1(\R^N)$. The same conclusion holds
for $a_1=a_1^*$ and $0<a_2<a_2^*$, under the additional assumption \eqref{A2}.
\item In all the other cases, that is $(a_1, a_2)=(a_1^*, a_2^*)$, and the case where $a_i>a_i^*$ for some $1 \leq i\leq 2$, we have $m(a_1, a_2)=-\infty$.
\item Let $\{(a_{1,n}, a_{2,n})\} \subset \R^+ \times \R^+$ be a sequence such that $0<a_{1,n} \leq a_1^*$ and $0<a_{2,n} \leq a_2^*$ with $(a_{1,n}, a_{2,n}) \neq (a_1^*, a_2^*)$ and $(a_{1,n}, a_{2,n}) \to (a_1^*, a_2^*)$ as $n \to \infty$, then
$$
\lim_{n\to\infty} m(a_{1,n}, a_{2,n}) = -\infty.
$$
\item
Let $ 0 < a_1\leq a_1^* $ and $0<a_2 \leq a_2^*$ with 
$(a_1, a_2) \neq (a_1^*, a_2^*)$. If 
$a_1=b_1+c_1$ and $a_2=b_2+c_2$ with $b_1, b_2, c_1, c_2 \geq 0$, then
\[
m(a_1, a_2) \leq m(b_1, b_2) + m(c_1, c_2).
\]
\end{enumerate}
\end{lem}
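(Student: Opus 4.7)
\emph{Proof plan.} The five parts share three recurring ingredients: the Gagliardo--Nirenberg inequality~\eqref{gn}, the $L^2$-preserving conformal rescaling $u_t(x)=t^{N/2}u(tx)$, and the observation that under~\eqref{AH} the coupling exponent $\ga:=N(r_1+r_2-2)/2$ satisfies $0<\ga<2$. Under this rescaling the ``diagonal'' part of $J$ (kinetic plus mass-critical) scales like $t^2$ while the coupling scales like $t^\ga$, so the regimes $t\to 0^+$ and $t\to\infty$ produce the sign information needed in each of the cases.

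For the upper bounds $m<0$ in parts (1)--(2), I would rescale any smooth compactly supported $(u_1,u_2)\in S(a_1,a_2)$ whose components overlap: writing $J((u_1)_t,(u_2)_t)=At^2-Bt^\ga$ with $B=\beta\int_{\R^N}|u_1|^{r_1}|u_2|^{r_2}\,dx>0$ and $\ga<2$, the $-Bt^\ga$ term dominates as $t\to 0^+$, so $J<0$ for small $t$. For the lower bound in (1), I would apply~\eqref{gn} at $p=2+4/N$ to each $\|u_i\|_{2+4/N}^{2+4/N}$ to obtain
\[
\tfrac{\mu_i N}{2N+4}\|u_i\|_{2+4/N}^{2+4/N}\leq \tfrac{1}{2}(a_i/a_i^{*})^{2/N}\|\nabla u_i\|_2^2,
\]
so when $a_i<a_i^{*}$ a positive remainder of $\|\nabla u_i\|_2^2$ survives. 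The coupling is absorbed via Young's inequality (splitting $|u_1|^{r_1}|u_2|^{r_2}$ into $|u_i|^{r_1+r_2}$) followed by~\eqref{gn} at $p=r_1+r_2<2+4/N$, yielding the sub-quadratic bound $C\|\nabla u_i\|_2^\ga$. Hence $J$ is coercive on $S(a_1,a_2)$, giving both $m>-\infty$ and boundedness of minimizing sequences. In part (2) the diagonal bound degenerates when $a_2=a_2^{*}$, so I would replace it with H\"older
\[
\int_{\R^N}|u_1|^{r_1}|u_2|^{r_2}\,dx\leq \|u_2\|_2^{r_2}\|u_1\|_q^{r_1},\qquad q:=\tfrac{2r_1}{2-r_2},
\]
and apply~\eqref{gn} a second time to $\|u_1\|_q$. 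Hypothesis~\eqref{A1} says exactly $r_2<2$ and $q\leq 2^{*}$, which legalizes this step; a short computation shows the resulting power on $\|\nabla u_1\|_2$ equals $\ga<2$, so coercivity still holds. The case $a_1=a_1^{*}$ under~\eqref{A2} is symmetric.

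For parts (3)--(4) I would produce explicit test functions and let $t\to\infty$. When $(a_1,a_2)=(a_1^{*},a_2^{*})$, the identity $\tfrac{1}{2}\|\nabla Q_i\|_2^2=\tfrac{\mu_i N}{2N+4}\|Q_i\|_{2+4/N}^{2+4/N}$ makes the $t^2$-coefficient of $J((Q_1)_t,(Q_2)_t)$ vanish identically, leaving $J=-\beta t^\ga\int_{\R^N}Q_1^{r_1}Q_2^{r_2}\,dx\to -\infty$. When some $a_i>a_i^{*}$, Lemma~\ref{smin} already gives $m_{\mu_i}(a_i)=-\infty$; I would tensor a scalar sequence sending $I_{\mu_i}$ to $-\infty$ on $S_1(a_i)$ with any fixed function of mass $a_j$ in the other slot, the subtracted coupling only helping. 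Part (4) is the quantitative version: set $\al_{i,n}^2:=a_{i,n}/a_i^{*}\leq 1$ so that $(\al_{1,n}(Q_1)_t,\al_{2,n}(Q_2)_t)\in S(a_{1,n},a_{2,n})$; the $t^2$-coefficient equals $\eps_n:=\sum_i\tfrac{\mu_i N}{2N+4}(\al_{i,n}^{2}-\al_{i,n}^{2+4/N})\|Q_i\|_{2+4/N}^{2+4/N}\geq 0$ with $\eps_n\to 0$, while the $t^\ga$-coefficient is bounded below by a positive constant for large $n$. Optimizing in $t_n\sim \eps_n^{-1/(2-\ga)}$ then sends $J\to -\infty$.

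Part (5) is the standard disjoint-supports construction: for each $\eta>0$, approximate the infima on $S(b_1,b_2)$ and $S(c_1,c_2)$ by smooth compactly supported pairs within $\eta$ of them, translate one of the pairs by a vector $y$ with $|y|$ large enough that all supports are pairwise disjoint, and observe that then the $L^2$ constraints add to $(a_1,a_2)$ and $J$ is exactly additive because the coupling integrand vanishes on each piece. Sending $\eta\to 0$ yields the inequality; components with $b_i=0$ or $c_i=0$ reduce to the scalar minimization of $I_{\mu_j}$ on the non-trivial slot in the obvious way. \emph{Main obstacle.} The only step doing genuine analytic work is part (2): once $a_2=a_2^{*}$, the component-wise Gagliardo--Nirenberg bound on $u_2$ saturates, and absorbing the coupling against the still-subcritical $u_1$-component through the H\"older--Gagliardo--Nirenberg chain above is precisely what forces the algebraic conditions~\eqref{A1} and~\eqref{A2}.
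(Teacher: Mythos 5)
Your plan for parts (1), (3), (4) and (5), and for the bounds $-\infty<m(a_1,a_2)<0$ in parts (1)--(2), matches the paper's proof in all essentials: the rescaling $u_t=t^{N/2}u(t\cdot)$ with the coupling exponent $\tfrac{N}{2}(r_1+r_2-2)\in(0,2)$ gives negativity; Gagliardo--Nirenberg at $p=2+\tfrac4N$ plus an absorption of the coupling (the paper uses H\"older where you use Young, an immaterial difference) gives coercivity when both masses are subcritical; the test pairs $(Q_1,Q_2)_t$ and the tensoring with a blowing-up scalar sequence handle part (3); your optimization in $t_n$ for part (4) is a cosmetic variant of the paper's "fix $t$, let $n\to\infty$, then let $t\to\infty$"; and part (5) is the same disjoint-support construction. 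Your H\"older--Gagliardo--Nirenberg chain for the coupling under \eqref{A1}, with the exponent computation giving $\|\nabla u_1\|_2^{\gamma}$, is also exactly the paper's estimate \eqref{bdd}.

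However, there is a genuine gap in part (2), and it is not the one you flag as the "main obstacle". Your chain gives
\[
J(u_1,u_2)\;\geq\; I_{\mu_1}(u_1)+I_{\mu_2}(u_2)-C\|\nabla u_1\|_2^{\gamma}
\;\geq\;\tfrac12\bigl(1-\mu_1 a_1^{2/N}\|Q\|_2^{-4/N}\bigr)\|\nabla u_1\|_2^2-C\|\nabla u_1\|_2^{\gamma},
\]
which proves $m(a_1,a_2^*)>-\infty$ and bounds $\|\nabla u_{1,n}\|_2$, but it controls $\|\nabla u_2\|_2$ not at all: when $a_2=a_2^*$ the only term involving $u_2$ on the right is $I_{\mu_2}(u_2)\geq 0$, and $I_{\mu_2}$ vanishes identically along the non-compact family $(Q_2)_t$. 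So "coercivity still holds" is false as a statement about the $H^1\times H^1$ norm, and boundedness of the second component of a minimizing sequence does not follow. The paper spends the bulk of its proof of (ii) on precisely this point: assuming $\|\nabla u_{2,n}\|_2\to\infty$, it rescales $v_{2,n}=(u_{2,n})_{t_n}$ with $t_n=\|\nabla u_{2,n}\|_2^{-1}$, invokes Merle's characterization of normalized sequences saturating the Gagliardo--Nirenberg inequality to get $e^{\mathrm{i}\theta_n}v_{2,n}(\cdot-x_n)\to Q_2$ in $L^2$, and then shows by a change of variables and dominated convergence that the coupling term $\int|u_{1,n}|^{r_1}|u_{2,n}|^{r_2}\,dx\to 0$ (this is where the strict inequality $\tfrac{2r_1}{2-r_2}<p r_1$, i.e.\ the room afforded by \eqref{A1}, is used a second time). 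That forces $J(u_{1,n},u_{2,n})\geq o_n(1)$, contradicting $m(a_1,a_2^*)<0$. You need this argument, or an equivalent profile-decomposition step, to complete part (2).
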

\begin{proof}
(i). For any $(u_1, u_2) \in S(a_1, a_2)$ and $t>0$, we have
\begin{equation}
\label{eq.conformal}
(u_{1,t}, u_{2,t})=(t^{\frac N 2} u_1(tx), t^{\frac N 2} u_2(tx)) \in S(a_1, a_2)
\end{equation}
and
\begin{align*} 
\begin{split}
J(u_{1, t}, u_{2, t})&=\frac {t^2}{2} \int_{\R^N}\big(|\nabla u_1|^2+|\nabla u_2|^2\big)dx-\frac{t^2 N}{2N+4} \int_{\R^N}
\big(\mu_1 |u_1|^{2+\frac 4 N} + \mu_2 |u_2|^{2 +\frac 4N}\big) \, dx \\
&\quad -t^{\frac N 2 (r_1 + r_2 -2)}\beta \int_{\R^N} |u_1|^{r_1}|u_2|^{r_2} \,dx.
\end{split}
\end{align*}
Note that $0<\frac N 2 (r_1+r_2-2)<2$ from \eqref{AH}. Then $J(u_{1,t}, u_{2,t})<0$ for any $t>0$ small enough. This implies that $m(a_1, a_2)<0$ for any $a_1, a_2>0$. Using \eqref{gn} and H\"older's inequality, we find that
\begin{align*} 
\begin{split}
J(u_1, u_2) &\geq \frac 12 \left(1 -\mu_1 a_1^{\frac 2N}\|Q\|_{2}^{-\frac 4 N} \right) \int_{\R^N}|\nabla u_1|^2 \, dx+\frac 12 \left(1 -\mu_2 a_2^{\frac 2N}\|Q\|_{2}^{-\frac 4 N}\right) 
\int_{\R^N}|\nabla u_2|^2 \, dx \\
& \quad -C_1\left(\int_{\R^N} |\nabla u_1|^2+|\nabla u_2|^2\, dx \right)^{\frac{N(r_1 + r_2 -2)}{4}}
\end{split}
\end{align*}
where $C_1=C_1(N, a_1, a_2, r_1,r_2,\beta)>0$. Since $0<\frac N 4(r_1+r_2-2)<1$, then $J$ restricted on $S(a_1, a_2)$ is coercive for any $0<a_1<a_1^*$ and $0<a_2<a_2^*$. This shows that $m(a_1, a_2)>-\infty$ and any minimizing sequence to \eqref{min} is bounded in $H^1(\R^N) \times H^1(\R^N)$ for any $0<a_1<a_1^*$ and $0<a_2<a_2^*$.

\noindent (ii).
For simplicity, we only show the proof for the case $0<a_1<a_1^*$ and $a_2=a_2^*$. In view of H\"older's inequality and \eqref{gn}, we first conclude that
\begin{equation} 
\label{bdd}
\begin{split}
\int_{\R^N} |u_1|^{r_1}|u_2|^{r_2} \,dx &\leq (a_2^*)^{\frac{r_2}{2}} \left(\int_{\R^N}|u_1|^{\frac{2r_1}{2-r_2}} \, dx\right)^{\frac{2-r_2}{2}} \leq C_2 \left(\int_{\R^N} |\nabla u_1|^2 \, dx \right)^{\frac{N(r_1 + r_2 -2)}{4}},
\end{split}
\end{equation}
where $C_2=C_2(N, a_1, a_2^*, r_1, r_2)>0$. Moreover, by Lemma \ref{smin}, we observe that
\begin{equation*}
\begin{split}
J(u_1, u_2) &\geq \frac 12 \int_{\R^N}|\nabla u_1|^2 \, dx -\frac{\mu_1N}{2N+4} \int_{\R^N} |u_1|^{2 +\frac 4N} \, dx -\beta \int_{\R^N} |u_1|^{r_1}| u_2|^{r_2} \, dx.
\end{split}
\end{equation*}
By applying \eqref{gn} and \eqref{bdd}, we have
\begin{equation}\label{eq.inf.2}
\begin{split}
J(u_1, u_2) &\geq \frac 12 \int_{\R^N}|\nabla u_1|^2 \, dx -\frac{\mu_1N}{2N+4} \int_{\R^N} |u_1|^{2 +\frac 4N} \, dx -C_2 \left(\int_{\R^N} |\nabla u_1|^2 \, dx \right)
^{\frac{N(r_1 + r_2 - 2)}{4}}\\
&\geq \frac 12 \left(1 -\mu_1 a_1^{\frac 2N}\|Q\|_{2}^{-\frac 4 N}\right) \int_{\R^N}|\nabla u_1|^2 \, dx -C_2 \left(\int_{\R^N} |\nabla u_1|^2 \, dx \right)
^{\frac{N(r_1 + r_2 - 2)}{4}}.
\end{split}
\end{equation}
Since $0<\frac N 4 (r_1+r_2-4)<1$, then $m(a_1, a_2^*) >-\infty$ for any $0<a_1<a_1^*$. Therefore, we have that $-\infty<m(a_1, a_2^*) <0$ for any $0<a_1<a_1^*$. Let us now prove the boundedness of any minimizing sequence to \eqref{min}. Suppose that $\{(u_{1,n}, u_{2,n})\} \subset S(a_1, a_2^*)$ is a minimizing sequence to \eqref{min}. From \eqref{eq.inf.2}, we know that $\{u_{1,n}\}$ is bounded in $H^1(\R^N)$. Let us now show that $\{u_{2,n}\} \subset S_1 (a_2^*)$ is bounded in $H^1(\R^N)$ as well. To do this, we shall argue by contradiction 
and suppose that $\{u_{2,n}\}$ is unbounded in $H^1(\R^N)$. Then we may assume that $\|\nabla u_{2, n}\|_2 \to \infty$ as $n \to \infty$.
Define 
$$
v_{2, n}:=(u_{2,n})_{t_n}, \quad t_n:=\frac{1}{\|\nabla u_{2,n}\|_2}=o_n(1).
$$
It is straightforward to check that $v_{2,n} \in S_1 (a_2^*)$ and $\|\nabla v_{2,n}\|_2=1$. 
According to \cite[Proposition~3.1]{Mer93}, there exist \(\{x_n\}\) and \(\{\theta_n\}\) such that 
\(e^{\textnormal{i}\theta_n}v_{2,n}(\cdot - x_n)\to Q_2\) in \(L^2(\mathbb{R}^N)\), as \(n\to\infty\), where 
\(Q_2\) is defined in \eqref{eq.gn-critical}.
Up to extract a subsequence, we can suppose that the convergence is pointwise almost everywhere and dominated by a function \(g\in L^2(\mathbb{R}^N)\).
There exist \(C_3 = C_3 (r_2)\) such that
\begin{equation}
\label{eq.inf.3}
\begin{split}
&\big||v_{2,n}(\cdot - x_n)|^{r_2} - Q_2^{r_2}\big|^{\frac{2}{{r_2}}} \to 0 \text{ a.e.}\\
&\big||v_{2,n}(\cdot - x_n)|^{r_2} - 
Q_2^{r_2}\big|^{\frac{2}{{r_2}}}\leq C_3 (g^{2} + Q_2^{2}).
\end{split}
\end{equation}
Observe that
\begin{equation}
\label{eq.inf.6}
\begin{split}
\int_{\R^N} |u_{1,n}|^{r_1} |u_{2,n}|^{r_2} dx &= \int_{\R^N} |u_{1,n}|^{r_1}|t_n^{-\frac{N}{2}} v_{2,n}(t_n^{-1}\cdot)|^{r_2}  dx \\
&=\int_{\R^N} t_n^{-\frac{Nr_2}{2} + N}|u_{1,n}(t_n\cdot))|^{r_1} |v_{2,n}|^{r_2} dx \\
&=\int_{\R^N} t_n^{-\frac{Nr_2}{2} + N}|u_{1,n}(t_n (\cdot - x_n)|^{r_1} |v_{2,n}(\cdot - x_n)|^{r_2} dx \\
&=\int_{\R^N} |t_n^\alpha u_{1,n}(t_n (\cdot - x_n))|^{r_1} (|v_{2,n}(\cdot - x_n)|^{r_2} - Q_2^{r_2}) dx\\
&\quad +\,\int_{\R^N} |t_n^\alpha u_{1,n}(t_n (\cdot - x_n)|^{r_1} Q_2^{r_2} dx,
\end{split}
\end{equation}
where
\[
\alpha = \frac{N(2 - r_2)}{2r_1} >0.
\]
We are going to estimate the first term in the right hand side of \eqref{eq.inf.6}. In view of H\"oder's inequality,
\begin{equation}
\label{eq.inf.5}
\begin{split}
&\left|\int_{\R^N} |t_n^\alpha u_{1,n}(t_n (\cdot - x_n))|^{r_1} (|v_{2,n}(\cdot - x_n)|^{r_2} - |Q_2|^{r_2}) dy\right|\\
&\leq\||t_n^\alpha u_{1,n}(t_n(\cdot- x_n))|^{r_1}\|_{\frac{2}{2 - r_2}} \||v_{2,n}(\cdot - x_n)|^{r_2} -  Q_2^{r_2}\|_{\frac{2}{r_2}}\\
&=\|u_{1,n}\|_{\frac{2r_1}{2 - r_2}}^{r_1}\||v_{2,n}(\cdot - x_n)|^{r_2} -  Q_2^{r_2}\|_{\frac{2}{r_2}}.
\end{split}
\end{equation}
From \eqref{eq.inf.3}, it follows that
\[
\left|\int_{\R^N} |t_n^\alpha u_{1,n}(t_n (\cdot - x_n))|^{r_1} (|v_{2,n}(\cdot - x_n)|^{r_2} - |Q_2|^{r_2}) dy\right| = o_n (1).
\]
From \eqref{A1}, there exists \(p > 1\) such that
\[
\frac{2}{2 - r_1} < p\leq\frac{2^*}{r_2}.
\]
By the H\"older inequality,
\[
\begin{split}
\int_{\R^N} |t_n^\alpha u_{1,n}(t_n y)|^{r_1}Q_2(y)^{r_2}  dy &\leq \||t_n^\alpha u_{1,n}(t_n y)|^{r_1}\|_{p}\|Q_2^{r_2}\|_{p'}  \\
&= t_n^{\alpha r_1 p - N}\|u_{1,n}\|_{pr_1}^{r_1}\|Q_2^{r_2}\|_{p'} 
\end{split}
\]
Since \(\alpha r_2 p - N > 0\), the sequence above converges to zero. Consequently, we conclude that
\[
\int_{\R^N} |u_{1,n}|^{r_1} |u_{2,n}|^{r_2} \,dx=o_n(1).
\]
Using Lemma \ref{smin} and \eqref{eq.single}, we obtain
\[
\begin{split}
J(u_{1,n},u_{2,n}) &= I_{\mu_1}(u_{1,n}) + I_{\mu_2}(u_{2,n}) - \beta\int_{\R^N}|u_{1,n}|^{r_1} |u_{2,n}|^{r_2} dx \\
&\geq - \beta\int_{\R^N}|u_{1,n}|^{r_1} |u_{2,n}|^{r_2}dx = o_n(1)
\end{split}
\]
This contradicts the fact that \(m(a_1,a_2^*) < 0\).
Thus we have that $\{u_{2,n}\} \subset S_1 (a_2^*)$ is bounded in $H^1(\R^N)$. Therefore, we derive that any minimizing sequence is bounded in $H^1(\R^N) \times H^1(\R^N)$.

\noindent (iii). 
Let us first consider the case $a_1>a_1^*$ and $a_2>0$. Since $m_{\mu_1}(a_1)=-\infty$ for any $a_1>a_1^*$, see Lemma \ref{smin},
there exists a sequence $\{u_{1,n}\} \subset S_1 (a_1)$ such that $I_{\mu_1}(u_{1,n}) \to -\infty$ as $n \to \infty$. Notice that, for any $u_2 \in S_1 (a_2)$,
$$
m(a_1, a_2) \leq J(u_{1,n},u_2) \leq I_{\mu_1}(u_{1,n})+I_{\mu_2}(u_2).
$$
Thus we obtain that $m(a_1, a_2)=-	\infty$. 
Similarly, we can show that $m(a_1, a_2)=-\infty$ for any $a_1>0$ and $a_2>a_2^*$. Let us now treat the case $a_1=a_1^*$ and $a_2=a_2^*$. In this case, we take \(u = (Q_1,Q_2)\)
in \eqref{eq.conformal}. Then we see that
\begin{equation*}
\begin{split}
J(u_{1,t}, u_{2,t})&=-\beta t^{\frac N 2(r_1+r_2-2)} \int_{\R^N} |u_1|^{r_1}|u_2|^{r_2} \, dx\\
&=-\beta \mu_1^{-\frac{Nr_1}{4}} \mu_2^{-\frac{Nr_2}{4}}t^{\frac N 2(r_1+r_2-2)} \int_{\R^N} |Q|^{r_1+r_2}\, dx,
\end{split}
\end{equation*}
from which we see that $J(u_{1,t}, u_{2,t}) \to -\infty$ as $t \to \infty$. This leads to $m(a_1^*, a_2^*)=-\infty$.

\noindent (iv).
Define
$$
u_{1,n}:=a_{1,n}^{\frac 12} \frac{Q_t}{\|Q\|_2} \in S_1 (a_{1,n}), \quad u_{2,n}:=a_{2,n}^{\frac 12} \frac{Q_t}{\|Q\|_2} \in S_1 (a_{2,n}).
$$
Then we find that
\begin{equation*}
\begin{split}
m(a_{1,n}, a_{2,n}) \leq J(u_{1,n}, u_{2,n}) &= \frac {a_{1,n} t^2}{2\|Q\|_2^2}\left(1-\frac{a_{1,n}}{a_1^*}\right)\int_{\R^N} |\nabla Q|^2 \,dx +\frac {a_{2,n} t^2}{2\|Q\|_2^2}\left(1-\frac{a_{2,n}}{a_2^*}\right)\int_{\R^N} |\nabla Q|^2 \,dx \\
&\quad  -\frac{\beta t^{\frac N 2 (r_1 + r_2 - 2)} a_{1,n}^{\frac{r_1}{2}}a_{2,n}^{\frac{r_2}{2}}}{\|Q\|_2^{r_1+r_2}}\int_{\R^N} |Q|^{r_1+r_2} \,dx.
\end{split}
\end{equation*}
If the sequence \(m(a_{1,n},a_{2,n})\) is bounded from below, up to extract a sequence
we can suppose that it has a limit. Therefore,
$$
\lim_{n \to \infty} m(a_{1,n},a_{2,n}) \leq -\frac{\beta t^{\frac N 2 (r_1 + r_2 - 2)} (a_1^*)^{\frac{r_1}{2}}(a_2^*)^{\frac{r_2}{2}}}{\|Q\|_2^{r_1+r_2}}\int_{\R^N} |Q|^{r_1+r_2} \,dx.
$$
However, taking \(t > 0\) large enough, we obtain a contradiction. This in turn shows that $m(a_{1,n},a_{2, n}) \to -\infty$ as $n \to \infty$.

\noindent (v). Since $C_0^{\infty}(\R^N)$ is dense in $H^1(\R^N)$, there exist $\{(v_{1,n}, v_{2,n})\} \subset S(b_1, b_2) \cap (C_0^{\infty} (\R^N)\times C_0^{\infty}(\R^N))$ and $\{(w_{1,n}, w_{2,n})\} \subset S(c_1, c_2) \cap (C_0^{\infty}(\R^N) \times C_0^{\infty}(\R^N))$ such that $v_{1,n}, v_{2,n} \geq 0$, $w_{1,n}, w_{2,n} \geq 0$ and
\[
J(v_{1,n}, v_{2,n})\leq m(b_1, b_2) + \frac {1}{2n}, \quad J(w_{1,n}, w_{2,n})\leq m(c_1, c_2)+ \frac {1}{2n}.
\]
Without loss of generality, we may assume that $\text{supp} \, v_{i,n} \cap \text{supp}\, w_{i,n}=\emptyset$ for \(i=1,2\), because the energy functional $J$ is invariant under any translation in $\R^N$. Define
$$
u_{1,n}:=v_{1,n}+w_{1,n} \in S_1(a_1), \quad u_{2,n}:=v_{2,n} +w_{2,n} \in S_1(a_2).
$$
Therefore, we conclude that
\begin{equation*}
\begin{split}
m(a_1, a_2) &\leq J(u_{1,n}, u_{2,n}) = J(v_{1,n}, v_{2,n}) +J(w_{1,n}, w_{2,n})
\\ 
&\leq m(b_1, b_2) +m(c_1, c_2) + \frac 1 n.
\end{split}
\end{equation*}
Thus we have that $m(a_1, a_2) \leq m(b_1, b_2) +m(c_1, c_2)$, because $n \in \N^+$ is arbitrary.
\end{proof}
\begin{proof}[Proof of Theorem~\ref{concentration-compactness}] 
Let $\{(u_{1,n}, u_{2,n})\} \subset S(a_1, a_2)$ be a minimizing sequence to \eqref{min}. Our aim is to prove that $\{(u_{1,n}, u_{2,n})\}$ is compact in $H^1(\R^N) \times H^1(\R^N)$ up to translations. In view of Lemma \ref{inf}, we first have that $\{(u_{1,n}, u_{2,n})\}$ is bounded in $H^1(\R^N) \times H^1(\R^N)$. It then follows from \cite[Lemma I.1]{Lio84b} that both sequences of $\{(u_{1,n}, u_{2,n})\}$ do not vanish. Otherwise, using the H\"older inequality we can prove that the coupling term converges to zero. 
Therefore, we obtain \(m(a_1,a_2)\geq 0\) from \eqref{gn}, 
contradicting (i) or (ii) of Lemma~\ref{inf}. 
Consequently, there exist a sequence $\{y_n\} \subset \R^N$ and $(u_1, u_2) \in H^1(\R^N) \times H^1(\R^N)$ with $ u_i\neq 0 $ such that 
\[
(u_{1,n}(\cdot-y_n), u_{2,n}(\cdot-y_n)) \wto (u_1, u_2)
\]
in $H^1(\R^N) \times H^1(\R^N)$ as $n \to \infty$. 
We set $b_1:=\|u_1\|_2^2 > 0$ and $b_2:=\|u_2\|_2^2 > 0$ and define
\[
v_{1,n}:=u_{1,n}(\cdot-y_n)-u_1, \quad v_{2,n}:=u_{2,n}(\cdot-y_n)-u_2.
\]
We claim that $(v_{1,n}, v_{2,n}) \to (0, 0)$ in $L^p(\R^N) \times L^p(\R^N)$ as $n \to \infty$ for any \(2 < p < 2^*\). 
On the contrary, by \cite[Lemma~I.1]{Lio84b} there exist a 
sequence \(\{z_n\} \subset \R^N\) and \((v_1,v_2) \in H^1(\R^N) \times H^1(\R^N)\)
with $(v_1, v_2) \neq (0, 0)$ such that 
\[
(v_{1,n}(\cdot-z_n), u_{2,n}(\cdot-z_n)) \wto (v_1, v_2)
\]
in $H^1(\R^N) \times H^1(\R^N)$ as $n \to \infty$. We set \(c_1 := \|v_1\|_2^2 \geq 0\) and \(c_2 := \|v_2\|_2^2 \geq 0\) and define
\[
w_{1,n} := v_{1,n}(\cdot - z_n) - v_1, \quad w_{2,n} := v_{2,n}(\cdot - z_n) - v_2.
\] 
Applying two times \cite[Lemma 3.2]{GJ16}, we obtain that
\begin{align} \label{sub0}
\begin{split}
m(a_1, a_2)&=J(u_{1,n}, u_{2,n}) +o_n(1)= J(v_{1,n},v_{2,n}) + J(u_1,u_2) + o_n (1)
\\
&=J(w_{1,n}, w_{2,n}) +J(v_1,v_2) +J(u_1, u_2) +o_n(1) \\
& \geq m(a_1-b_1-c_1, a_2-b_2-c_2) + m(c_1, c_2) +m(b_1, b_2) +o_n(1).
\end{split}
\end{align}
On the other hand, using the assertion $(\textnormal{v})$ of Lemma \ref{inf}, we have that
\begin{align} \label{sub1}
\begin{split}
m(a_1, a_2) &\leq m(a_1-b_1-c_1, a_2-b_2-c_2) +m(b_1+c_1, b_2+c_2) \\
&\leq m(a_1-b_1-c_1, a_2-b_2-c_2) + m(c_1, c_2) +m(b_1, b_2).
\end{split}
\end{align}
Therefore, from \eqref{sub0} and \eqref{sub1}, we know that
\begin{align} \label{sub2}
m(b_1+c_1, b_2+c_2) =m(c_1, c_2) +m(b_1, b_2)
\end{align}
and
\begin{align} \label{sub3}
m(c_1, c_2)=J(v_1, v_2), \quad m(b_1, b_2)=J(u_1, u_2).
\end{align}
Observe that 
$$
m(c_1, c_2)=J(|v_1|^*, |v_2|^*), \quad m(b_1, b_2)=J(|u_1|^*,|u_2|^*),
$$
where $f^*$ stands for the symmetric-decreasing rearrangement of $f$. Without restriction, we shall assume that $u_1, u_2, v_1$ and $v_2$ are nonnegative and
radially symmetric. Otherwise, we can replace them with their symmetric-decreasing rearrangements. 
For any $n \in \N^+$, we now define a function $\chi_n: \R^N \to [0, 1]$ by
$$
\chi_n (x): = 
\begin{cases}
1 & \text{ if } |x|\leq n,\\
n + 1 - |x| & \text{ if } n < |x|\leq n + 1,\\
0 & \text{ if } |x| > n + 1.
\end{cases}
$$
Clearly, we can compute that
$$
\nabla \chi_n (x) = 
\begin{cases}
0& \text{ if } |x|\leq n,\\
- \frac{x}{|x|} & \text{ if } n < |x|\leq n + 1,\\
0 & \text{ if } |x| > n + 1,
\end{cases}
$$
and $|\nabla \chi_n(x)| \leq 1$ for any $x \in \R^N$. Define
\begin{align*}
g_{i,n} (x) := \chi_n (x) u_i (x), \quad x \in \R^N,
\end{align*}
and
\begin{align*}
 h_{i,n} (x) &:= \chi_n (x - (n + 1) e_N) v_i (x - (n + 1) e_N), \quad x \in \R^N,
\end{align*}
where $e_N=(0,\cdots, 0, 1) \in \R^N$ and $i=1, 2$. Note that $\text{supp}\, g_{i,n} \cap \text{supp}\, h_{i,n}=\emptyset$ for $i=1,2$. Then
$$
\|f_{i,n}\|_2^2:=\|g_{i,n}\|_2^2+\|h_{i,n}\|^2 = b_i+c_i+o_n(1).
$$
Furthermore, from \cite[Lemma 3.1]{Gar12}, we get that
\begin{align} \label{lem.coupled-single.3}
\|\nabla f_{i, n}^{*}\|_2^2\leq \|\nabla f_{i,n}\|_2^2 - \frac{3}{4}\left(\|\nabla g_{i,n}\|^2 _{L^2(\R\times U_{n,i})} + 
\|\nabla h_{i, n}\|^2_{L^2(\R\times V_{n,i})}\right),
\end{align}
where $U_{i, n}$ and $V_{i, n}$ are subsets of \(\R^{N - 1}\) defined by
\begin{equation*}
\begin{split}
U_{i,n} &:= \big\{x'\in\R\sp{N - 1} \mid \sup_{\R} g_{i,n} (x',\cdot) \leq \sup_{\R} h_{n,i} (x',\cdot)\big\}\\
V_{i,n} &:= \big\{x'\in\R\sp{N - 1} \mid \sup_{\R} h_{i,n} (x',\cdot) < \sup_{\R} g_{n,i} (x',\cdot)\big\}.
\end{split}
\end{equation*}
We also define
\begin{equation*}
\begin{split}
U_i &:= \big\{x'\in\R\sp{N - 1} \mid| \sup_{\R} u_{i} (x',\cdot) \leq \sup_{\R} v_{i} (x',\cdot)\big\}\\
V_i &:= \big\{x'\in\R\sp{N - 1} \mid| \sup_{\R} v_{i} (x',\cdot) < \sup_{\R} u_{i} (x',\cdot)\big\}.
\end{split}
\end{equation*}
It is simple to see that $U_i \subset U_{i,n}$ for $i=1,2$. Indeed, note first that
\begin{equation*}
\begin{split}
\sup_{x_N\in\R} g_{i,n} (x',x_N) &= g_{i,n} (x',0) = \chi_n (x',0)u_i(x',0)\\
\sup_{x_N \in\R} h_{i,n} (x',x_N) &= h_{i,n} (x', n + 1) = \chi_n (x',0)v_i(x',0).
\end{split}
\end{equation*}
If $x' \in U_i$, then $u_i(x',0)\leq v_i(x',0)$. It then follows that 
\[
\chi_n (x',0)u_i(x',0)\leq \chi_n (x',0)v_i(x',0).
\]
Therefore, we have that $x' \in U_{i,n}$. Similarly, we can show that 
\[
V_i\cap B_{n + 1}^{N - 1}(0) \subset V_{i,n}\cap B_{n + 1}^{N - 1}(0)
\]
where \(B_{n + 1}^{N - 1}\) is the ball in 
\(\mathbb{R}^{N - 1}\) with center at the origin and radius \(n + 1\). 
Thus we are able to obtain that
\begin{equation*}
\begin{split}
\|\nabla g_{i,n}\|^2 _{L^2(\R\times U_{n,i})}&\geq  \|\nabla g_{i,n}\|^2 _{L^2(\R\times U_i)} = \|\nabla (\chi_n u_i)\|^2 _{L^2(\R\times U_i)} \\
&= \|u_i\nabla\chi_n + \chi_n\nabla u_i\|^2 _{L^2(\R\times U_i)}\\
&\geq \|\chi_n\nabla u_i\|_{L^2(\R\times U_i)}^2 
- 2|\text{Re}(u_i \nabla \chi_n, \chi_n \nabla u_i)_{L^2(\R\times U_i)}|\\
&\geq \|\nabla u_i\|_{L^2((\R\times U_i)\cap B_n (0))}^2  - 2\|u_i\chi_n\|_{L^2 ((\R\times U_i)\cap C_n)}\|\nabla\chi_n\cdot\nabla u_i\|_{L^2((\R\times U_i)\cap C_n)} \\
&=\|\nabla u_i\|_{L^2(\R\times U_i)}^2 + o_n(1)
\end{split}
\end{equation*}
where \(C_n\) is the subset of \(\mathbb{R}^N\) defined as \(B(0,n+1)\cap\overline{B(0,n)}^c\). Moreover,
\begin{equation*}
\begin{split}
\|\nabla h_{i,n}\|^2 _{L^2(\R\times V_{i,n})} 
&\geq \|\nabla h_{i,n}\|^2 _{L^2((\R\times V_{i,n})\cap B_{n + 1}(0,n+1))}\\
&\geq \|\nabla h_{i,n}\|^2 _{L^2((\R\times V_i)\cap B_{n + 1}(0,n+1))} \\
&=\|v_i\nabla\chi_n + \chi_n \nabla v_i\|^2 _{L^2((\R\times V_i)\cap B_{n + 1}(0))}\\
&\geq \|\chi_n \nabla v_i\|^2 _{L^2((\R\times V_i)\cap B_{n + 1}(0))} - 2\left|\text{Re}\left(v_i \nabla \chi_n, \chi_n \nabla v_i\right)_{L^2((\R\times V_i)\cap B_{n + 1}(0))} \right| \\
&\geq \|\nabla v_i\|^2 _{L^2((\R\times V_i)\cap B_{n + 1}(0))} - 2\,\|v_i \chi_n\|_{L^2((\R\times V_i)\cap C_n)} \| \nabla \chi_n \cdot \nabla v_i\|_{L^2((\R\times V_i)\cap C_n)} \\
&=\|\nabla v_i\|^2 _{L^2(\R\times V_i)} + o_n(1).
\end{split}
\end{equation*}
Taking the limit in \eqref{lem.coupled-single.3} as $n \to \infty$, we then obtain that
$$
\lim_{n\to +\infty} \|\nabla f_{i,n}^{*}\|_{L^2}^2\leq \lim_{n\to\infty}
\|\nabla f_{i,n}\|_{L^2}^2 - \frac{3}{4}\left( \|\nabla u_i\|_{L^2(\R\times U_i)}^2+\|\nabla v_i\|^2 _{L^2(\R\times V_i)}\right).
$$
We set \(b := (b_1,b_2)\) and \(c = (c_1,c_2)\). 
Therefore, we conclude that
\begin{align*}
\begin{split}
m(b) + m(c) &= J(u) + J(v)
=J(g_{n})+J(h_{n}) +o_n(1) 
\geq J(f_{n}) + o_n(1) \\
& \geq  J(f_{n}^*) 
+\frac{3}{4}\sum_{i = 1}^2 \left(\|\nabla v_i\|^2 _{L^2(\R\times V_i)} + \|\nabla u_i\|_{L^2(\R\times U_i)}^2\right) +o_n(1)\\
& \geq  m(b+c) + \frac{3}{4}\sum_{i = 1}^2 \left(\|\nabla v_i\|^2 _{L^2(\R\times V_i)} + \|\nabla u_i\|_{L^2(\R\times U_i)}^2\right) +o_n(1),
\end{split}
\end{align*}
where \(f_n^* = (f_{1,n}^*,f_{2,n}^*)\). Since \(v_1\) and \(u_1\)
are radially symmetric-decreasing, the quantity
\[
\|\nabla u_1\|_{L^2 (\R\times U_1)}^2 + \|\nabla v_1\|_{L^2 (\R\times V_1)}^2
\] 
is positive. Otherwise, if \(U_1\neq\emptyset\), \(u_1\) would be constant
on the unbounded set \(\R\times U_1\). This implies that \(u_1\equiv 0\),
contradicting \(b_1 > 0\), while, if \(U_1 = \emptyset\), then 
\(\|\nabla v_1\|_2^2 = 0\), contradicting \(b_2 > 0\).
Therefore,
\[
m(b_1+c_1, b_2+c_2) <m(b_1, b_2) + m(c_1, c_2).
\]
We then reach a contradiction with \eqref{sub2}. 
This readily infers that $(v_{1,n}, v_{2,n}) \to (0, 0)$ in $L^p(\R^N) \times L^p(\R^N)$ as $n \to \infty$ for any \(2 < p < 2^*\). 
\[
\begin{split}
m(a_1,a_2) &= \frac{1}{2}\lim_{n\to\infty} (\|\nabla v_{1,n}\|_2^2 + \|\nabla v_{2,n}\|_2^2) + m(b_1,b_2)\\
&\geq \frac{1}{2}\lim_{n\to\infty} (\|\nabla v_{1,n}\|_2^2 + \|\nabla v_{2,n}\|_2^2) + m(a_1,a_2).
\end{split}
\]
The last inequality follows from (i), (ii) and (v) of Lemma~\ref{inf}. 
All the inequalities are equalities. Therefore, \(m(a) = m(b)\) and
\(\{\nabla v_{1,n}\}\) and \(\{\nabla v_{2,n}\}\) converge to zero as well. 

To achieve the desired result, it suffices to show that $b_1=a_1$ and $b_2=a_2$. For this, we only need to demonstrate that if $(b_1, b_2) \neq (a_1, a_2)$, then $m(a_1, a_2)<m(b_1, b_2)$. If $b_1<a_1$ and $b_2<a_2$, by Lemma \ref{inf}, then $m(a_1-b_1, a_2-b_2)<0$. Then $m(a_1, a_2)<m(b_1, b_2)$ by the assertion (v) of Lemma \ref{inf}. We now consider the case $b_1=a_1$ and $b_2<a_2$. We assume by contradiction that $m(a_1, a_2)=m(a_1, b_2)$. Define a function \(\chi\colon [0,+\infty)\to\R\) by \(\chi(t):= J(u_1,tu_2)\). Thus
\[
\begin{split}
\chi(t) &= I_{\mu_1} (u_1) + \frac{t^2}{2}\int_{\R^N}|\nabla u_2|^2 \, dx - 
\frac{\mu_2 Nt^{2 + \frac{4}{N}}}{2N + 4}\int_{\R^N} |u_2|^{2 + \frac{4}{N}} \,dx  - \beta t^{r_2}
\int_{\R^N} |u_1|^{r_1} |u_2|^{r_2}dx.
\end{split}
\]
We are going to verify that \(\chi'(1) = 0\). In fact, if \(\chi'(1) > 0\), then there exist \(0< \varepsilon <1\) such that
\(J(u_1,(1 - \varepsilon)u_2) < J(u_1, u_2)\). This then suggests that  
\(m(a_1,(1 - \varepsilon)^2 b_2) < m(a_1,b_2)\). However, this is impossible, because of
\[
m(a_1,b_2)\leq m(a_1, (1 - \varepsilon)^2b_2) + 
m(0, b_2-(1 - \varepsilon)^2b_2) = m(a_1,(1 - \varepsilon)^2b_2).
\]
where the first inequality follows from the assertion (v) of Lemma \ref{inf} and the 
second one follows from Lemma~\ref{smin}. If \(\chi'(1) < 0\), then there exist
\(0 < \varepsilon < 1\) satisfying \((1 + \varepsilon)^2 b_2 < a_2\) such that
\(J(u_1,(1 + \varepsilon)u_2) < J(u_1, u_2)\). 
This then indicates that \(m(a_1,a_2)\leq m(a_1,(1 + \varepsilon)^2b_2) < m(a_1,b_2)\), which contradicts our assumption that $m(a_1, a_2)=m(a_1, b_2)$.
As a consequence, \(\chi'(1) = 0\). Then
\[
\chi'(1) = \int_{\R^N} |\nabla u_2|^2\, dx - \mu_2 \int_{\R^N}|u_2|^{2 + \frac4N} \, dx - 
\beta r_2 \int_{\R^N} |u_1|^{r_1} |u_2|^{r_2}\,dx = 0.
\]
Multiplying the second equation in \eqref{elliptic-sys} by $u_2$ and integrating on $\R^N$, we then have that
 \(\lambda_2 = 0\). However, by applying the Liouville type result (see \cite[Lemma~2.3]{Gou18}) to the equation in \eqref{eq.thm.1} satisifed by $|u_2|$, we know that $\lambda_2>0$,
as long as \(N\leq 4\). This is a contradiction. Therefore, we have that $m(a_1, a_2)<m(a_1, b_2)$. Similarly, we can prove that $m(a_1, a_2)<m(b_1, a_2)$ if
\(b_1 < a_1\). This completes the proof.
\end{proof}
\subsection{Characterization of minimizers} In this subsection, we shall reveal some properties of minimizers to \eqref{min} and give the proof of Theorem \ref{thm}.
\begin{proof}[Proof of Theorem \ref{thm}] Let $(u_1, u_2) \in S(a_1, a_2)$ be
such that $J(u_1, u_2)=m(a_1,a_2)$. Note that $(|u_1|, |u_2|) \in S(a_1, a_2)$ and
\begin{equation}
\label{eq.gradient-1}
\int_{\R^N} |\nabla |u_1||^2 \,dx \leq \int_{\R^N} |\nabla u_1|^2 \,dx, \quad \int_{\R^N} |\nabla |u_2||^2 \,dx \leq \int_{\R^N} |\nabla u_2|^2 \,dx.
\end{equation}
Then we have that $J(|u_1|, |u_2|)=m(a_1, a_2)$. This further gives rise to
\begin{equation}
\label{eq.gradient-2}
\int_{\R^N} |\nabla |u_1||^2 \,dx = \int_{\R^N} |\nabla u_1|^2 \,dx, \quad \int_{\R^N} |\nabla |u_2||^2 \,dx = \int_{\R^N} |\nabla u_2|^2 \,dx.
\end{equation}
For simplicity, we shall write $w_1=|u_1|$ and $w_2=|u_2|$ in the following. Since $J(w_1, w_2)=m(a_1, a_2)$, then $(w_1, w_2)$ satisfies weakly the nonlinear elliptic system
\begin{align} 
\label{eq.thm.1}
\left\{
\begin{aligned}
-\Delta w_1 +\lambda_1 w_1 = \mu_1 w_1^{1 + \frac{4}{N}} + \beta r_1 w_1^{r_1 - 1} w_2^{r_2}, \\
-\Delta w_2 + \lambda_2 w_2 =\mu_2 w_2^{1 + \frac{4}{N}} + \beta r_2 w_1^{r_1} w_2^{r_2 - 1}.
\end{aligned}
\right.
\end{align}
(a). Solutions are classical and do not have zeros. By local regularity results, the system is
satisfied in the classical sense. In fact, we can write the first equation as
\(L w_1 = f_1(x)\), where 
\[
f_1 := \lambda_1 w_1 - \mu_1 w_1^{1 + \frac{4}{N}} - \beta w_1^{r_1 - 1} 
w_2^{r_2}
\]
and \(L\) is the Laplacian operator.  
We show that \(f_1\) is in \(L^p _{loc}\) for some \(1 < p\). Showing that the interaction term
is locally integrable requires some estimates in the case \(N\geq 3\).
We are seeking \(p > 1\) such that 
\(
w_1^{p(r_1-1)} w_2^{pr_2}\text{ is in } L^1 _{loc}.
\)
To apply the H\"older inequality, we need to ensure the existence of \(q > 1\) 
such that 
\[
1\leq qpr_2\leq\frac{2N}{N - 2},\quad 1\leq\frac{q}{q - 1}\cdot p(r_1 - 1)\leq 
\frac{2N}{N - 2}.
\]
We set the restriction \(1 < p(r_1 - 1) < 2^*\). The two inequalities
are equivalent to
\[
\frac{2N}{2N - (N - 2)p(r_1 - 1)}\leq q\leq\frac{2N}{(N - 2)pr_2},\quad q\leq\frac{1}{p(r_1 - 1) - 1}.
\]
For the existence of such \(q > 1\), it is enough to have
\[
\frac{2N}{2N - (N - 2)p(r_1 - 1)}\leq \frac{2N}{(N - 2)pr_2},\quad
\frac{2N}{(N - 2)pr_2} > 1,\quad p(r_1 - 1) < 2^*.
\]
Equivalently,
\[
p < 2^*\min\left\{\frac{1}{r_1 + r_2 - 1},\frac{1}{r_2},
\frac{1}{r_1 - 1}\right\}.
\]
In conclusion, there exist \(p > 1\) such that \(f_1\) is \(L^p _{loc}\), provided
\[
1 < \frac{2^*}{\max\{r_1 + r_2 - 1,r_2,r_1 - 1\}} = 
\frac{2^*}{r_1 + r_2 - 1}.
\]
From \eqref{AH}, the righthand side can be estimated from below by
\[
2 > \frac{2N^2}{(N - 2)(N + 4)} := p_0 > 1
\]
Therefore, the function \(f_1\) is in \(L^{p_0}_{loc}\) as it can be written as
the sum of three functions in \(L^{p_0} _{loc}\), with \(p_0\) depending on
the dimension of the domain space.
We can then apply results on local 
\(L^p\) regularity, when \(p\) is in \((1,2)\),
as in \cite[\S7]{GM12}, and the bootstrapping procedure described in \cite[\S8.4.3]{Eva10} to claim that both \(w_1\) and \(w_2\) are 
two-times continuously differentiable functions. Therefore, from
Hopf's Lemma, \cite[Lemma 1,~p. 519]{Eva10}, we obtain $w_1, w_2>0$. Therefore, \(u_1\) and \(u_2\) do not have zeroes.

(b). Solutions are strictly symmetric-decreasing. Observe that $(w_1^*, w_2^*) \in S(a_1, a_2)$ and
$$
\int_{\R^N} |\nabla w_1^*|^2 \,dx \leq \int_{\R^N} |\nabla w_1|^2 \,dx, \quad \int_{\R^N} |\nabla w_2^*|^2 \,dx \leq \int_{\R^N} |\nabla w_2|^2 \,dx,
$$
$$
\int_{\R^N} |w_1^*|^{2+\frac 4N} \,dx=\int_{\R^N} |w_1|^{2+\frac 4N} \,dx, \quad \int_{\R^N} |w_2^*|^{2+\frac 4N} \,dx=\int_{\R^N} |w_2|^{2 +\frac 4 N} \,dx.
$$
In addition, there holds that
$$
\int_{\R^N} |w_1|^{r_1} |w_2|^{r_2} \,dx \leq \int_{\R^N} |w_1^*|^{r_1} |w_2^*|^{r_2} \,dx. 
$$
As a result, we have that $m(a_1, a_2)=J(w_1^*, w_2^*)=J(w_1, w_2)$. This in turn leads to 
\begin{align} \label{w12}
\int_{\R^N} |\nabla w_1^*|^2 \,dx = \int_{\R^N} |\nabla w_1|^2 \,dx, \quad \int_{\R^N} |\nabla w_2^*|^2 \,dx = \int_{\R^N} |\nabla w_2|^2 \,dx
\end{align}
and
\begin{equation}
\label{w13}
\int_{\R^N} |w_1|^{r_1} |w_2|^{r_2}dx = \int_{\R^N} |w_1^*|^{r_1} |w_2^*|^{r_2}dx. 
\end{equation}
According to \cite[Theorem~1.1]{BZ88}, $w_1$ and $w_2$ are symmetric-decreasing
provided the two sets
\begin{gather*}
C^*_1 := \{x\in\R^N\mid\nabla w_1^* (x) = 0\}\cap {w_1^*}^{-1} (0,\sup w_1^*),\\
C^*_2 := \{x\in\R^N\mid\nabla w_2^* (x) = 0\}\cap {w_2^*}^{-1} (0,\sup w_2^*)
\end{gather*}
have zero measure. In what follows, we are going to show that $C_1^*=\emptyset$ and $C_2^*=\emptyset$. We rely on an argument used in 
\cite[Proof~of~Theorem~1.4]{ZL18}. Since $(w_1^*, w_2^*) \in S(a_1, a_2)$ is a minimizer to \eqref{min}, then $(w_1^*, w_2^*)$ solves the nonlinear elliptic system
\begin{align} \label{eq.minima-structure.5}
\left\{
\begin{aligned}
-\Delta w_1^* +\lambda_1 w_1^* = \mu_1 {w_1^*}^{1 + \frac{4}{N}} + 
\beta r_1 {w_1^*}^{r_1 - 1}{w_2^*}^{r_2}, \\
-\Delta w_2^* + \lambda_2 w_2^* =\mu_2 {w_2^*}^{1 + \frac{4}{N}} + 
\beta r_2 {w_1^*}^{r_1} {w_2^*}^{r_2 - 1}.
\end{aligned}
\right.
\end{align}
Since $w_1^*, w_2^* \in H^1(\R^N)$, by using elliptic regularity theory just as in (a), we have $w_1^*, w_2^* \in C^2(\R^N)$. In addition, we have that $w_1^*(x), w_2^*(x) \to 0$ as 
$|x| \to \infty$, because \(w_i^*\) are positive, symmetric-decreasing and \(L^2\). We prove that $C_1^*=\emptyset$. 
On the contrary, there exists a point  $z_0 \in \R^N$ such that $z_0 \in C_1^*$. That is,
\begin{align} \label{assume}
\nabla w_1^*(z_0)=0, \quad w_1^*(z_0) < \sup w_1^*=w_1^*(0).
\end{align}
Define $\Omega:=\{x \in \R^N : w_1^*(x) \geq w_1^*(z_0)\}$. Since $w_1$ enjoys the first equation in \eqref{eq.minima-structure.5}, then
\begin{equation} 
\label{equw}
\begin{split}
-\Delta (w_1^* - w_1^*(z_0)) &+ \lambda_1 (w_1^* - w_1^*(z_0)) = \mu_1 
{w_1^*}^{1 + \frac{4}{N}}\\ 
&+ \beta r_1 {w_1^*}^{r_1 - 1}{w_2^*}^{r_2} - 
\lambda_1 w_1^* (z_0)\text{ on}\ \Omega.
\end{split}
\end{equation}
Since \(w_1^*\) is radially symmetric-decreasing and $\nabla w_1^*(z_0)=0$, then $\Delta w_1^* (z_0) = 0$. Therefore, from \eqref{equw}, we obtain that
$$
\lambda w_1^* (z_0) =  \mu_1 {w_1^*(z_0)}^{1 + \frac{4}{N}} +  \beta r_1 {w_1^*(z_0)}^{r_1 - 1}{w_2^*(z_0)}^{r_2}.
$$
It then follows that
\begin{equation*}
\begin{split}
-\Delta (w_1^* - w_1^*(z_0)) &+ \lambda_1 (w_1^* - w_1^*(z_0)) = \mu_1
{w_1^*}^{1 + \frac{4}{N}} - {w_1^*(z_0)}^{1 + \frac{4}{N}} \\
&+\beta r_1 {w_1^*}^{r_1 - 1} {w_2^*}^{r_1}- {w_1^*(z_0)}^{r_1 - 1}{w_2^*(z_0)}^{r_2}\ \text{on}\ \Omega.
\end{split}
\end{equation*}
By the definition of $\Omega$ and the fact that \(w_1^*\) and \(w_2^*\)
are symmetric decreasing, we have 
$$
-\Delta (w_1^* - w_1^*(z_0)) + \lambda_1 (w_1^* - w_1^*(z_0)) \geq 0 \quad \text{in} \,\, \Omega.
$$
Again, from \cite[Lemma 1,\ p.~519]{Eva10} to conclude that $w_1^*=w_1^*(z_0)$ on $\Omega$, because of $\nabla w_1^*(z_0)=0$.
Since $0 \in \Omega$, then $w_1^*(z_0)=w_1^*(0)$. This contradicts the assumption made in \eqref{assume} that $w_1^*(z_0)<w_1^*(0)$. Hence we get that $C_1^*=\emptyset$. A similar argument applies to \(C_2^*\). Therefore, $w_1$ and $w_2$ are strictly symmetric-decreasing with respect to points \(x_1\) and \(x_2\), respectively, and vanishing
at infinity. From \eqref{w13},
$$
\int_{\R^N} {(w_1 ^{r_1})}^* {(w_2 ^{r_2})}^* \,dx = \int_{\R^N} w_1^{r_1} w_2 ^{r_2} \,dx = \int_{\R^N} {w_1 ^{r_1}}(\cdot - x_1) {w_2 ^{r_2}} (\cdot - x_1) \,dx.
$$
Making use of \cite[Theorem 3.4]{LL01}, we obtain $ w_2(\cdot-x_1) = w_2^* $. 
This implies that $x_1=x_2$. We set \(x_0 := x_1 = x_2\). From the equalities in \eqref{eq.gradient-1} and \eqref{eq.gradient-2}, we can apply \cite[Theorem 4.1]{HS04}. Then,
there are $\theta_1, \theta_2 \in \mathbb{R}$ such that $u_1=e^{\textnormal{i} \theta_1} |u_1|$ and $u_2=e^{\textnormal{i} \theta_2} |u_2|$. Setting \(R_i := w_i (\cdot - x_0)\),
we obtain 
\[
(u_1(x),u_2(x)) = (e^{\textnormal{i} \theta_1} R_1 (x - x_0),e^{\textnormal{i} \theta_2} R_2 (x - x_0)).
\]
\end{proof}

\subsection{Orbital stability of minimizers} We are now going to discuss orbital stability of minimizers to \eqref{min}, whose proof is based on the methods displayed in \cite{CL82}.
\begin{proof}[Proof of Theorem \ref{stability}] The proof of orbital stability of the set $G:=G(a_1, a_2)$ is completed by contradiction arguments. We assume that there exists a sequence 
\[
\{(\phi_{1,0}^n, \phi_{2,0}^n)\}\subset H^1(\R^N) \times H^1(\R^N)
\]
satisfying $d((\phi_{1,0}^n, \phi_{2,0}^n), G)=o_n(1)$, a constant $\eps_0>0$ and a sequence $\{t_n\} \subset \R^+$ such that
\begin{align} \label{unstable}
d((\phi_{1,n}(t_n, \cdot), \phi_{2,n}(t_n, \cdot)), G) \geq \eps_0,
\end{align}
where $(\phi_{1,n}, \phi_{2,n}) \in C_t H^1_x([0, \infty) \times \R^N) \times C_t H^1_x([0, \infty) \times \R^N)$ is the solution to the Cauchy problem for \eqref{sys} with the initial datum $(\phi_{1,0}^n, \phi_{2,0}^n)$. The 
conservation and the continuity of energy and masses give
\begin{gather*}
J(\phi_{1,n}(t_n, \cdot), \phi_{2,n}(t_n, \cdot)) = 
J(\phi_{1,0}^n, \phi_{2,0}^n) = m(a_1,a_2) + o_n (1)\\
\label{eq.orb.2}
\|\phi_{i,n}(t_n, \cdot)\|_2^2 = 
\|\phi_{0,i}(t_n, \cdot)\|_2^2 = a_i + o_n (1)
\end{gather*}
for \(i=1,2\). Therefore we see that
\begin{equation}
\label{eq.}
(u_{1,n},u_{2,n}) := \left(\frac{\sqrt{a_1}\phi_{1,n}(t_n,\cdot)}%
{\|\phi_{1,0}^n\|_2},\frac{\sqrt{a_2}\phi_{2,n}(t_n,\cdot)}%
{\|\phi_{2,0}^n\|_2}\right)\in S(a_1,a_2)
\end{equation}
is a minimizing sequence for \(J\).
From Theorem~\ref{concentration-compactness}, there exist \(u\) in \(G\) and
a sequence \(\{y_n\}\) in \(\R^N\) such that \(u_n(\cdot - y_n)\to u\) in 
\(H^1(\R^N)\times H^1 (\R^N)\). Therefore,
\[
\begin{split}
d(\phi_{n}(t_n,\cdot),G) &\leq d(\phi_{n}(t_n,\cdot),u_n) + d(u_n,G) \\
&= d(\phi_{n}(t_n,\cdot),u_n) + d(u_n(\cdot - y_n),G) = o_n(1).
\end{split}
\]
The first distance converges to zero from \eqref{eq.orb.2}, while the convergence
of the second distance follows from the Concentration-Compactness behaviour
of \(u_n\). Therefore, we reach a contradiction with \eqref{unstable}. 
Hence the set $G(a_1, a_2)$ is orbitally stable.
\end{proof}
\subsection{Concentration of minimizers} We are now in a position to analyze concentration of minimizers to \eqref{min} and present the proof of Theorem \ref{concentration-blowup}.

\begin{proof}[Proof of Theorem \ref{concentration-blowup}] Let $\{(u_{1,n}, u_{2,n})\} \subset S(a_1, a_2)$ be a minimizer to \eqref{min}. 
Then $(u_{1,n},u_{2,n})$ solves the nonlinear elliptic system
\begin{align} \label{equu}
\left\{
\begin{aligned}
-\Delta u_{1,n} + \lambda_{1,n} u_{1,n} &= \mu_1 |u_{1,n}|^{\frac 4N}u_{1,n} + \beta r_1 |u_{1,n}|^{r_1-2} u_{1,n} |u_{2,n}|^{r_2},\\
-\Delta u_{2,n} + \lambda_{2,n} u_{2,n} &= \mu_2 |u_{2,n}|^{\frac 4N}u_{2,n} + \beta r_2 |u_{2,n}|^{r_1-2}u_{1,n} |u_{1,n}|^{r_2}.
\end{aligned}
\right.
\end{align}
From the Pohozaev identity, it follows that
\begin{align} \label{ph}
\begin{split}
\frac{1}{2} \int_{\R^N} |\nabla u_{1,n}|^2+|\nabla u_{2,n}|^2 dx &=\frac{N}{2N+4} \int_{\R^N} \mu_1|u_{1,n}|^{2+\frac 4 N} + \mu_2 |u_{2,n}|^{2 +\frac 4N} \, dx \\
&\quad +\frac{N(r_1 + r_2 -2)}{4}\beta \int_{\R^N} |u_{1,n}|^{r_1}|u_{2,n}|^{r_2} \, dx.
\end{split}
\end{align}
From \eqref{AH}, \eqref{gn} and \eqref{ph} it follows that
\begin{equation*}
\begin{split}
J(u_{1,n},u_{2,n})&=\left(\frac{N(r_1 + r_2 -2)}{4}-1\right) \beta\int_{\R^N}|u_{1,n}|^{r_1}|u_{2,n}|^{r_2} \, dx \\
&\geq C\left(\frac{N(r_1 + r_2 -2)}{4}-1\right) \left(\int_{\R^N} |\nabla u_{1,n}|^2+|\nabla u_{2,n}|^2\, dx \right)^{\frac{N(r_1 + r_2 -2)}{4}}.
\end{split}
\end{equation*}
where $C=C(N, r_1, r_2, a_1, a_2, \beta)$. According to (iv) of Lemma~\ref{inf},
\[
\eps_n^{-2}:=\int_{\R^N} |\nabla u_{1,n}|^2+|\nabla u_{2,n}|^2 \,dx \to \infty \quad \mbox{as} \, \, n \to \infty.
\]
Define
\begin{equation}
\label{eq.sysv.4}
(v_{1,n}, v_{2,n}):= (\eps_n^{\frac N 2}u_{1,n}(\eps_n \cdot), \eps_n^{\frac N 2}u_{2,n}(\eps_n \cdot)) \in S(a_1, a_2).
\end{equation}
Using \eqref{ph}, we get that
\begin{equation} \label{eq.sysv.1}
\begin{split}
\frac {1}{2} \int_{\R^N} |\nabla v_{1,n}|^2+|\nabla v_{2,n}|^2 \,dx &=\frac{N}{2N+4} \int_{\R^N} \mu_1|v_{1,n}|^{2+\frac 4 N} + \mu_2 |v_{2,n}|^{2 +\frac 4N} \,dx \\
& +\frac{N(r_1 + r_2 -2)}{4}\beta \eps_n^{2-N\left(\frac{r_1 + r_2}{2} -1\right)}\int_{\R^N} |v_{1,n}|^{r_1}|v_{2,n}|^{r_2}\,dx.
\end{split}
\end{equation}
Since $2<r_1+r_2<2+ \frac 4N$, $\|\nabla v_{1,n}\|_2^2 + \|\nabla v_{2,n}\|^2 _2 = 1$ and $\eps_n=o_n(1)$, then
\begin{align} \label{l1}
\int_{\R^N} \mu_1|v_{1,n}|^{2+\frac 4 N} + \mu_2|v_{2,n}|^{2 +\frac 4N} \,dx =\frac{N+2}{N} +o_n(1).
\end{align}
From \cite[Lemma I.1]{Lio84b}, there exists a sequence $\{y_n\} \in \R^N$ and a nontrivial \((v_1, v_2)\) in \(H^1(\R^N) \times H^1(\R^N)\) such that 
\[
(v_{1,n}(\cdot - y_n), v_{2,n}(\cdot -y_n)) \wto (v_1, v_2)
\]
in $H^1(\R^N)\times H^1(\R^N)$ as $n \to \infty$.
From \eqref{equu}, we obtain
\begin{equation} \label{equv}  
\left\{
\begin{split} 
-\Delta v_{1,n} &+ \eps_n^2\lambda_{1,n} v_{1,n} = \mu_1 |v_{1,n}|^{\frac 4N}v_{1,n} + \beta r_1\eps_n^{2-N\left(
\frac{r_1 + r_2}{2}-1\right)}|v_{1,n}|^{r_1-2} v_{1,n} |v_{2,n}|^{r_2},\\
-\Delta v_{2,n} &+ \eps_n^2\lambda_{2,n} v_{2,n} = 
\mu_2 |v_{2,n}|^{\frac 4N}v_{2,n} + \beta r_2  
\eps_n^{2-N\left(\frac{r_1 + r_2}{2}-1\right)} |v_{2,n}|^{r_2-2} v_{2,n} |v_{1,n}|^{r_1}.
\end{split}
\right.
\end{equation}
By multiplying the first equation of \eqref{equv} by \(v_{1,n}\) and integrating on $\R^N$, we have that
\begin{equation*}
\begin{split}
\eps_n^2\lambda_{1,n} a_{1,n} &= -\int_{\R^N}|\nabla v_{1,n}|^2 \, dx + 
\mu_1 \int_{\R^N}|v_{1,n}|^{2 + \frac4N} \,dx+ \beta r_1 \eps_n^{2-N\left(\frac{r_1 + r_2}{2}-1\right)} \int_{\R^N} |v_{1,n}|^{r_1}|v_{2,n}|^{r_2} \,dx.
\end{split}
\end{equation*}
It then follows that $\{\eps_n^2\lambda_{1,n}\}$ is bounded. A similar argument applies to show that 
$\{\eps_n^2\lambda_{2,n}\}$ is bounded as well. 
In addition, by multiplying the first equation and the second equation of \eqref{equv} by $v_{1,n}$ and $v_{2,n}$ respectively and integrating on $\R^N$, we get that 
$$
a_1^*\eps_n^2 \lambda_{1,n} + a_2^*\eps_n^2 \lambda_{2,n} =\frac 2 N +o_n(1),
$$ 
where we also used \eqref{l1} and the fact that $\|\nabla v_{1,n}\|_2^2 + \|\nabla v_{2,n}\|^2 _2 = 1$. As a consequence, there are constants $\lambda_1, \lambda_2 \in \R$ such that
\begin{equation}
\label{eq.sysv.3}
\eps_n^2\lambda_{1,n} \to \lambda_1,\quad \eps_n^2\lambda_{2,n} \to \lambda_2
\quad a_1^*\lambda_1+a_2^*\lambda_2=\frac 2N.
\end{equation}
Define $a_1:=\|v_1\|_2^2$ and $a_2:=\|v_2\|_2^2$. Then $a_1 \leq a_1^*$ and $a_2 \leq a_2^*$. By the weak convergence, up to extract a subsequence, we have
\[
\|v_{i,n}(\cdot - y_n) - v_i\|_2^2 = \|v_{i,n}\|_2^2 -\|v_i\|_2^2+o_n(1)=
a_i^* - a_i+o_n(1)
\]
for \(i=1,2\). Taking the limit in \eqref{eq.sysv.1}, we find that
$$
\sum_{i = 1}^2 \|\nabla v_{i,n}\|_{2}^{2}=\frac{N}{N+2} \sum_{i = 1}^2 \mu_i \|v_{i,n}\|_{2 + \frac 4N}^{2 + \frac 4N} +o_n(1).
$$
From \cite[Theorem]{BL83} and \eqref{gn}, it then follows that
\begin{equation*}
\begin{split}
&\sum_{i = 1}^2 \|\nabla v_{i,n}(\cdot-y_n) -\nabla  v_i\|_{2}^{2} + 
\sum_{i = 1}^2 \|\nabla v_i\|_{2}^{2}\\
&=\frac{N}{N+2} \sum_{i = 1}^2 \mu_i \|v_{i,n}(\cdot-y_n) - v_i\|_{2 + \frac 4N}^{2 + \frac 4N} +  \frac{N}{N+2} \sum_{i = 1}^2 \mu_i \|v_i\|_{2 + \frac 4N}^{2 + \frac 4N} +o_n(1)\\
&\leq \sum_{i=1}^2\left(\frac{a_i^* - a_i}{a_i^*}\right)^{\frac2N}\|\nabla v_{i,n}(\cdot-y_n) - \nabla v_i\|_2^2 +
\sum_{i=1}^2 \left(\frac{a_i}{a_i^*}\right)^{\frac2N}\|\nabla v_i\|_2^2+o_n(1).
\end{split}
\end{equation*}
Thus we are able to infer that $a_1=a_1^*$ and $a_2=a_2^*$. Furthermore, we can get that 
$$
\|\nabla v_{1,n}(\cdot-y_n)-\nabla v_1\|_2=o_n(1), \quad \|\nabla v_{2,n}(\cdot-y_n)-\nabla v_2\|_2=o_n(1).
$$ 
Consequently, we have that $(v_{1,n}(\cdot - y_n), v_{2,n}(\cdot -y_n)) \to (v_1, v_2)$ in the space 
$H^1(\R^N)\times H^1(\R^N)$ as $n \to \infty$. From \eqref{equv}, $v_1 $
and $v_2$ solve the system 
\begin{align} \label{equs}
-\Delta v_1 + \lambda_1 v_1 = \mu_1 |v_1|^{\frac 4N}v_1, \quad -\Delta v_2 + \lambda_2 v_2 &=\mu_2|v_2|^{\frac 4N}v_2.
\end{align}
Moreover, \(\lambda_i\geq 0\) for \(i=1,2\). The uniqueness of solutions
to the equations above, up to a conformal rescaling and phase change, implies that
there exist
\(\theta_i\in\R\) and \(y_i\in\R^N\) such that 
\[
v_i (x) = e^{i\theta_i}\left(\frac{\lambda_i N}{2\mu}\right)^{\frac{N}{4}} 
Q\left(\left(\frac{\lambda_i N}{2}\right)^{\frac{1}{2}} (x - y_i)\right),\quad
x\in\R^N,
\]
where $Q$ is the unique positive, radially symmetric solution to \eqref{equq} with $p=2+4/N$. 
Combining \eqref{eq.sysv.3}, \eqref{eq.sysv.4}, the equality above and the
convergence of \(\{v_{i,n}(\cdot - y_n)\}\), we conclude the proof.
\end{proof}
\def\MR#1{\href{http://www.ams.org/mathscinet-getitem?mr=#1}{MR#1}}
\def\cprime{$'$} \def\cprime{$'$} \def\cprime{$'$} \def\cprime{$'$}
  \def\cprime{$'$} \def\cprime{$'$} \def\cprime{$'$} \def\cprime{$'$}
  \def\cprime{$'$} \def\polhk#1{\setbox0=\hbox{#1}{\ooalign{\hidewidth
  \lower1.5ex\hbox{`}\hidewidth\crcr\unhbox0}}} \def\cprime{$'$}
  \def\cprime{$'$} \def\cprime{$'$}
\providecommand{\bysame}{\leavevmode\hbox to3em{\hrulefill}\thinspace}
\providecommand{\MR}{\relax\ifhmode\unskip\space\fi MR }
\providecommand{\MRhref}[2]{%
  \href{http://www.ams.org/mathscinet-getitem?mr=#1}{#2}
}
\providecommand{\href}[2]{#2}


\begin{thebibliography}{1}
\bibitem{AP93}
Antonio Ambrosetti and Giovanni Prodi, \emph{A primer of nonlinear analysis},
  Cambridge Studies in Advanced Mathematics, vol.~34, Cambridge University
  Press, Cambridge, 1993. \MR{1225101}

\bibitem{BFK15}
VS~Bagnato, Dimitri~J Frantzeskakis, Panagiotis~G Kevrekidis, Boris~A Malomed,
  and D~Mihalache, \emph{Bose-einstein condensation: Twenty years after}, arXiv
  preprint arXiv:1502.06328 (2015).

\bibitem{BJ18}
Thomas Bartsch and Louis Jeanjean, \emph{Normalized solutions for nonlinear
  {S}chr\"{o}dinger systems}, Proc. Roy. Soc. Edinburgh Sect. A \textbf{148}
  (2018), no.~2, 225--242. \MR{3777573}

\bibitem{BJS16}
Thomas Bartsch, Louis Jeanjean, and Nicola Soave, \emph{Normalized solutions
  for a system of coupled cubic {S}chr\"{o}dinger equations on
  {$\mathbb{R}^3$}}, J. Math. Pures Appl. (9) \textbf{106} (2016), no.~4,
  583--614. \MR{3539467}

\bibitem{BS17}
Thomas Bartsch and Nicola Soave, \emph{A natural constraint approach to
  normalized solutions of nonlinear schr{\"o}dinger equations and systems},
  Journal of Functional Analysis \textbf{272} (2017), no.~12, 4998--5037.

\bibitem{BS19}
\bysame, \emph{Multiple normalized solutions for a competing system of
  {S}chr\"{o}dinger equations}, Calc. Var. Partial Differential Equations
  \textbf{58} (2019), no.~1, Art. 22, 24. \MR{3895385}

\bibitem{BZZ21}
Thomas Bartsch, Xuexiu Zhong, and Wenming Zou, \emph{Normalized solutions for a
  coupled {S}chr\"{o}dinger system}, Math. Ann. \textbf{380} (2021), no.~3-4,
  1713--1740. \MR{4297197}

\bibitem{BBGM07}
J.~Bellazzini, V.~Benci, M.~Ghimenti, and A.~M. Micheletti, \emph{On the
  existence of the fundamental eigenvalue of an elliptic problem in {$\Bbb R\sp
  N$}}, Adv. Nonlinear Stud. \textbf{7} (2007), no.~3, 439--458. \MR{2340279
  (2008g:35048)}

\bibitem{BS11b}
Jacopo Bellazzini and Gaetano Siciliano, \emph{Scaling properties of
  functionals and existence of constrained minimizers}, J. Funct. Anal.
  \textbf{261} (2011), no.~9, 2486--2507. \MR{2826402}

\bibitem{BS11a}
\bysame, \emph{Stable standing waves for a class of nonlinear
  {S}chr\"{o}dinger-{P}oisson equations}, Z. Angew. Math. Phys. \textbf{62}
  (2011), no.~2, 267--280. \MR{2786153}

\bibitem{BL83}
Ha{\"{\i}}m Br{\'e}zis and Elliott Lieb, \emph{A relation between pointwise
  convergence of functions and convergence of functionals}, Proc. Amer. Math.
  Soc. \textbf{88} (1983), no.~3, 486--490. \MR{699419}

\bibitem{BZ88}
John~E. Brothers and William~P. Ziemer, \emph{Minimal rearrangements of
  {S}obolev functions}, J. Reine Angew. Math. \textbf{384} (1988), 153--179.
  \MR{929981 (89g:26013)}

\bibitem{CCW11}
Daomin Cao, I-Liang Chern, and Jun-Cheng Wei, \emph{On ground state of spinor
  {B}ose-{E}instein condensates}, NoDEA Nonlinear Differential Equations Appl.
  \textbf{18} (2011), no.~4, 427--445. \MR{2825303}

\bibitem{CL82}
T.~Cazenave and P.-L. Lions, \emph{Orbital stability of standing waves for some
  nonlinear {S}chr\"{o}dinger equations}, Comm. Math. Phys. \textbf{85} (1982),
  no.~4, 549--561. \MR{677997}

\bibitem{CZ21}
Zhen Chen and Wenming Zou, \emph{Normalized solutions for nonlinear
  {S}chr\"{o}dinger systems with linear couples}, J. Math. Anal. Appl.
  \textbf{499} (2021), no.~1, Paper No. 125013, 22. \MR{4208029}

\bibitem{CJS10}
Mathieu Colin, Louis Jeanjean, and Marco Squassina, \emph{Stability and
  instability results for standing waves of quasi-linear {S}chr\"{o}dinger
  equations}, Nonlinearity \textbf{23} (2010), no.~6, 1353--1385. \MR{2646070}

\bibitem{BBE97}
B.~D. Esry, C.~H. Greene, P.~Burke~Jr, James, and J.~L. Bohn,
  \emph{Hartree-fock theory for double condensates}, Physical Review Letters
  \textbf{78} (1997), no.~19, 3594.

\bibitem{Eva10}
Lawrence~C. Evans, \emph{Partial differential equations}, second ed., Graduate
  Studies in Mathematics, vol.~19, American Mathematical Society, Providence,
  RI, 2010. \MR{2597943 (2011c:35002)}

\bibitem{Gar12}
Daniele Garrisi, \emph{On the orbital stability of standing-wave solutions to a
  coupled non-linear {K}lein-{G}ordon equation}, Adv. Nonlinear Stud.
  \textbf{12} (2012), no.~3, 639--658. \MR{2976057}

\bibitem{GM12}
Mariano Giaquinta and Luca Martinazzi, \emph{An introduction to the regularity
  theory for elliptic systems, harmonic maps and minimal graphs}, second ed.,
  Appunti. Scuola Normale Superiore di Pisa (Nuova Serie) [Lecture Notes.
  Scuola Normale Superiore di Pisa (New Series)], vol.~11, Edizioni della
  Normale, Pisa, 2012. \MR{3099262}

\bibitem{GT01}
David Gilbarg and Neil~S. Trudinger, \emph{Elliptic partial differential
  equations of second order}, Classics in Mathematics, Springer-Verlag, Berlin,
  2001, Reprint of the 1998 edition. \MR{MR1814364 (2001k:35004)}

\bibitem{Gou18}
Tianxiang Gou, \emph{Existence and orbital stability of standing waves to
  nonlinear {S}chr\"{o}dinger system with partial confinement}, J. Math. Phys.
  \textbf{59} (2018), no.~7, 071508, 12. \MR{3829492}

\bibitem{GJ16}
Tianxiang Gou and Louis Jeanjean, \emph{Existence and orbital stability of
  standing waves for nonlinear schr{\"o}dinger systems}, Nonlinear Analysis
  \textbf{144} (2016), 10--22.

\bibitem{HS04}
H.~Hajaiej and C.~A. Stuart, \emph{On the variational approach to the stability
  of standing waves for the nonlinear {S}chr\"{o}dinger equation}, Adv.
  Nonlinear Stud. \textbf{4} (2004), no.~4, 469--501. \MR{2100909}

\bibitem{LZ21}
Houwang Li and Wenming Zou, \emph{Normalized ground states for semilinear
  elliptic systems with critical and subcritical nonlinearities}, J. Fixed
  Point Theory Appl. \textbf{23} (2021), no.~3, Paper No. 43, 30. \MR{4287320}

\bibitem{LL01}
Elliott~H. Lieb and Michael Loss, \emph{Analysis}, second ed., Graduate Studies
  in Mathematics, vol.~14, American Mathematical Society, Providence, RI, 2001.
  \MR{1817225}

\bibitem{Lio84a}
P.-L. Lions, \emph{The concentration-compactness principle in the calculus of
  variations. {T}he locally compact case. {I}}, Ann. Inst. H. Poincar\'e Anal.
  Non Lin\'eaire \textbf{1} (1984), no.~2, 109--145. \MR{778970 (87e:49035a)}

\bibitem{Lio84b}
\bysame, \emph{The concentration-compactness principle in the calculus of
  variations. {T}he locally compact case. {II}}, Ann. Inst. H. Poincar\'e Anal.
  Non Lin\'eaire \textbf{1} (1984), no.~4, 223--283. \MR{778974 (87e:49035b)}

\bibitem{Mal08}
Boris Malomed, \emph{Multi-component bose-einstein condensates: theory},
  Emergent Nonlinear Phenomena in Bose-Einstein Condensates, Springer, 2008,
  pp.~287--305.

\bibitem{Mer93}
F.~Merle, \emph{Determination of blow-up solutions with minimal mass for
  nonlinear {S}chr\"{o}dinger equations with critical power}, Duke Math. J.
  \textbf{69} (1993), no.~2, 427--454. \MR{1203233}

\bibitem{NW11}
Nghiem~V. Nguyen and Zhi-Qiang Wang, \emph{Orbital stability of solitary waves
  for a nonlinear {S}chr\"odinger system}, Adv. Differential Equations
  \textbf{16} (2011), no.~9-10, 977--1000. \MR{2850761}

\bibitem{NW13}
\bysame, \emph{Orbital stability of solitary waves of a 3-coupled nonlinear
  {S}chr\"{o}dinger system}, Nonlinear Anal. \textbf{90} (2013), no.~1--26.
  \MR{3073624}

\bibitem{NW16}
\bysame, \emph{Existence and stability of a two-parameter family of solitary
  waves for a 2-coupled nonlinear {S}chr\"{o}dinger system}, Discrete Contin.
  Dyn. Syst. \textbf{36} (2016), no.~2, 1005--1021. \MR{3392916}
  
\bibitem{Shi14} Masataka Shibata, \emph{Stable standing waves of nonlinear Schr\"odinger equations with a general nonlinear term}, Manuscripta Math. \textbf{143} (2014), no. ~ 1--2, 221--237. \MR{3147450}


\bibitem{Shi17}
Masataka Shibata, \emph{A new rearrangement inequality and its application for $L^2$-constraint minimizing problems}, Math. Z.  \textbf{287} (2017), no. 1--2, 341-359. \MR{3694679}

\bibitem{Tao06}
Terence Tao, \emph{Nonlinear dispersive equations}, CBMS Regional Conference
  Series in Mathematics, vol. 106, Published for the Conference Board of the
  Mathematical Sciences, Washington, DC, 2006, Local and global analysis.
  \MR{2233925 (2008i:35211)}

\bibitem{YZ22}
Zhaoyang Yun and Zhitao Zhang, \emph{Normalized solutions to {S}chr\"{o}dinger
  systems with linear and nonlinear couplings}, J. Math. Anal. Appl.
  \textbf{506} (2022), no.~1, Paper No. 125564, 19. \MR{4301530}

\bibitem{ZL18}
Zhitao Zhang and Haijun Luo, \emph{Symmetry and asymptotic behavior of ground
  state solutions for {S}chr\"{o}dinger systems with linear interaction},
  Commun. Pure Appl. Anal. \textbf{17} (2018), no.~3, 787--806. \MR{3809102}
\end{thebibliography}
\end{document}